\theoremstyle{plain}
\newtheorem{theorem}{Theorem}[section]
\newtheorem{conjecture}[theorem]{Conjecture}
\newtheorem{lemma}[theorem]{Lemma}
\newtheorem{problem}{Problem}
\newcommand{\vast}{\bBigg@{4}}
\newcommand{\Vast}{\bBigg@{5}}
\definecolor{bulgarianrose}{rgb}{0.28, 0.02, 0.03}
\definecolor{gray}{rgb}{0.5, 0.5, 0.5}
\theoremstyle{definition}
\def\namedlabel#1#2{\begingroup
    #2%
    \def\@currentlabel{#2}%
    \phantomsection\label{#1}\endgroup
}
\newcommand{\refL}[1]{Lemma~\ref{#1}}
\newcommand\E{\operatorname{\mathbb E{}}}
\renewcommand\Pr{\operatorname{\mathbb P{}}}
\newcommand\RR{{\mathbb R}}
\newcommand{\cA}{\mathcal{A}}
\newcommand{\cB}{\mathcal{B}}
\newcommand{\cC}{\mathcal{C}}
\newcommand{\cD}{\mathcal{D}}
\newcommand{\cG}{\mathcal{G}}
\newcommand{\cU}{\mathcal{U}}
\newcommand{\cT}{\mathcal{T}}
\newcommand{\fC}{\mathfrak{C}}
\newcommand{\fS}{\mathfrak{S}}
\newcommand{\indic}[1]{\mathbbm{1}_{\{{#1}\}}}
\newcommand\bigpar[1]{\bigl(#1\bigr)}
\newcommand\Bigpar[1]{\Bigl(#1\Bigr)}
\newcommand\biggpar[1]{\biggl(#1\biggr)}
\newcommand\bigsqpar[1]{\bigl[#1\bigr]}
\newcommand\Bigsqpar[1]{\Bigl[#1\Bigr]}
\newcommand\biggsqpar[1]{\biggl[#1\biggr]}
\newcommand\bigcpar[1]{\bigl\{#1\bigr\}}
\newcommand\Bigcpar[1]{\Bigl\{#1\Bigr\}}
\newcommand\biggcpar[1]{\biggl\{#1\biggr\}}
\newcommand\Biggcpar[1]{\Biggl\{#1\Biggr\}}
\newcommand\ceil[1]{\lceil#1\rceil}
\newcommand\bigceil[1]{\bigl\lceil#1\bigr\rceil}
\newcommand\floor[1]{\lfloor#1\rfloor}
\newcommand{\phat}{\hat{p}}
\newcommand{\Gnp}{G_{n,p}}
\newcommand{\Gnpp}[1]{G_{n,{#1}}}
\newcommand{\Gnph}{G_{n,\phat}}
\newcommand{\cupdot}{\mathbin{\mathaccent\cdot\cup}}
\let\OLDthebibliography\thebibliography
\renewcommand\thebibliography[1]{
  \OLDthebibliography{#1}
  \setlength{\parskip}{0pt}
  \setlength{\itemsep}{0pt plus 0.3ex}
}
\title{The jump of the clique chromatic number of random graphs} 
\author{Lyuben Lichev\thanks{Ecole Normale Sup\'erieure de Lyon, Lyon, France. 
E-mail: {\tt lubetidobrilov@gmail.com}.}
 \ and 
Dieter Mitsche\thanks{Institut Camille Jordan, Univ. Lyon 1, Lyon, France and Univ.\ Jean Monnet, Saint-Etienne, France. 
E-mail: {\tt dmitsche @unice.fr}.
Research partially supported by grant GrHyDy ANR-20-CE40-0002 and by IDEXLYON of Universit\'{e} de Lyon (Programme Investissements d'Avenir ANR16-IDEX-0005).}
 \ and 
Lutz Warnke\thanks{School of Mathematics, Georgia Institute of Technology, Atlanta GA~30332, USA. 
E-mail: {\tt warnke@math.gatech.edu}.
Research partially supported by NSF grant DMS-1703516, NSF~CAREER grant~DMS-1945481, and a Sloan Research Fellowship.}}
\date{May 25, 2021} 
\begin{document}

\maketitle
 
\begin{abstract}
The clique chromatic number of a graph is the smallest number of colors in a vertex coloring 
so that no maximal clique is monochromatic. 
In~2016 McDiarmid, Mitsche and Pra{\l}at noted that around~$p \approx n^{-1/2}$ 
the clique chromatic number of the random graph~$G_{n,p}$ changes by~$n^{\Omega(1)}$ 
when we increase the edge-probability~$p$ by~$n^{o(1)}$, 
but left the details of this surprising phenomenon as an open~problem. 

We settle this problem, i.e., resolve the nature of this polynomial `jump' of the clique chromatic number 
of the random graph~$G_{n,p}$ around~edge-probability~$p \approx n^{-1/2}$.  
Our proof uses a mix of approximation and concentration arguments, 
which enables us to (i)~go beyond Janson's inequality used in previous work
and (ii)~determine the clique chromatic number of~$G_{n,p}$ up to logarithmic~factors for any~edge-probability~$p$. 
\end{abstract}

\section{Introduction}
The chromatic number is one of the central topics in random graph theory, 
which has repeatedly advanced the field.  
Studying variants of such fundamental parameters is not only a way to sharpen our tools and techniques, 
but also motivated by our desire to better understand the underlying nature of their behavior, which can be richer than one might suspect. 
For example, the usual chromatic number of a random graph increases smoothly with the average degree. 
By~contrast, this paper concerns a clique based variant of the chromatic number that exhibits a very different behavior: 
it can `jump' by~$n^{\Omega(1)}$ when we increase the average degree by~$n^{o(1)}$; cf.~Figure~\ref{fig:upper_bound}.

\begin{figure}[t]
\centering
\includegraphics[height=4.5cm]{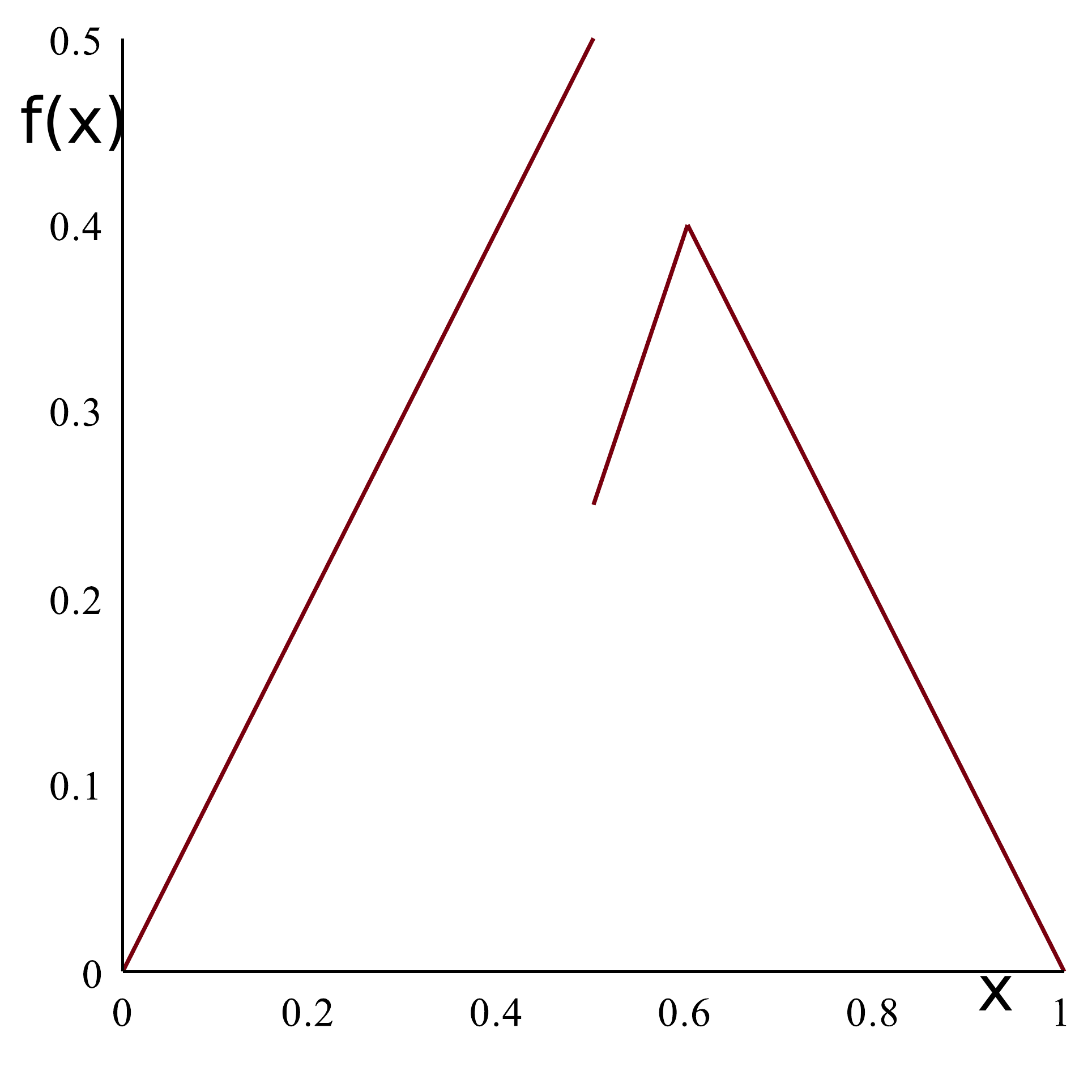} 
\vspace{-1.0em}
\caption{McDiarmid, Mitsche and Pra{\l}at showed that if the average degree is~${np=n^{x+o(1)}}$ for some constant~${x \in (0, 1) \setminus \{1/2\}}$, 
then the typical clique chromatic number of the random graph~$\Gnp$ is~${\chi_c(\Gnp)=n^{f(x)+o(1)}}$
for the 
function~${f(x):=\max\{x\indic{x<1/2},\min\{(3x-1)/2,1-x\}\}}$ plotted above. 
Our main result \mbox{Theorem~\ref{thm:main1}} zooms in on the 
discontinuity point~${x=1/2}$, 
and explains the polynomial `jump' of the clique chromatic number~$\chi_c(\Gnp)$ 
from~$n^{1/2+o(1)}$ down to~$n^{1/4+o(1)}$ around average degree~$np=n^{1/2+o(1)}$.%
\label{fig:upper_bound}}
\end{figure}


The \emph{clique chromatic number} of a graph~$G$, denoted by $\chi_c(G)$,
is the smallest number of colors needed to color the vertices of~$G$ so that no inclusion-maximal 
clique is monochromatic (ignoring isolated vertices). 
Writing~$\chi(G)$ for the usual chromatic number, we always have~$\chi_c(G) \le \chi(G)$, with equality for triangle-free graphs (among others). 
But~$\chi_c(G)$ can be much smaller than~$\chi(G)$, since~$2=\chi_c(K_n)< \chi(K_n)=n$ for~$n \ge 3$.   
An important conceptual difference is monotonicity with respect to taking 
subgraphs: 
while~$H \subseteq G$ always implies~$\chi(H) \le \chi(G)$, this is no longer true\footnote{%
For an example with~$H \subseteq G$ and~$\chi_c(H) > \chi_c(G)$, 
we connect a new vertex to all vertices of a triangle-free graph~$H$ with~${\chi(H)>k}$; 
the resulting graph~$G \supseteq H$ clearly satisfies~${\chi_c(G)=2}$, whereas triangle-freeness ensures that~${\chi_c(H)=\chi(H)>k}$.} 
for the clique chromatic~number; see~\cite{AST1991,DSSW1991,MS1999,CPTT2016,JMRS} for many additional structural~results.  
Furthermore, the basic algorithmic problem of deciding~$\chi_c(G) \le 2$ is 
\mbox{NP-complete}~\cite{KT2002,BGGPS2004,Marx2011}, 
illustrating that problems which are easy for the usual chromatic number can become difficult for the clique chromatic~number.

Around~2016, McDiarmid, Mitsche and Pra{\l}at~\cite{MMP} initiated the study of the clique chromatic number~$\chi_c(\Gnp)$ of the $n$-vertex binomial random graph~$\Gnp$ with edge-probability~$p=p(n)$.
They showed that if the average degree of~$\Gnp$ is~${np=n^{x+o(1)}}$ for some constant~${x \in (0, 1) \setminus \{1/2\}}$, 
then with high probability\footnote{As usual, we say that an event holds~\emph{whp} (with~high~probability) if it holds with probability tending to~$1$ as~$n\to \infty$.} (whp) 
the clique chromatic number is~${\chi_c(\Gnp)=n^{f(x)+o(1)}}$ for the discontinuous function~$f(x)$ plotted in Figure~\ref{fig:upper_bound}. 
In particular, around the discontinuity point~$x=1/2$ an increase of the average degree 
by~$n^{o(1)}$ can thus decrease~${\chi_c(\Gnp)}$ by~$n^{\Omega(1)}$, 
but the finer details of this polynomial `jump' of~${\chi_c(\Gnp)}$ 
remained the main open problem (see~\cite[Section~5]{MMP}).
Indeed, the largest gaps in~\cite{MMP} occur around average degree~$np=n^{1/2+o(1)}$, 
where their whp estimates~$n^{1/4+o(1)} \le \chi_c(\Gnp) \le n^{1/2+o(1)}$ 
leave the nature of the surprising jump of~${\chi_c(\Gnp)}$ completely~open.
%

\pagebreak[2]

\subsection{Main contributions} 
In this paper we resolve the nature of the polynomial `jump' of the clique chromatic number~$\chi_c(\Gnp)$ 
depicted in Figure~\ref{fig:upper_bound}. 
More concretely, Theorem~\ref{thm:main1} zooms in on the jump around~${p = n^{-1/2+o(1)}}$, 
and shows that the steep transition  
from ${\chi_c(\Gnp)}={\Theta(np/\log(np))}$ for~${p \ll n^{-1/2}}$ 
down to ${\chi_c(\Gnp)}={\Theta(p^{3/2}n/\sqrt{\log n})}$ for ${p \gg n^{-1/2}\sqrt{\log n}}$ 
arises due to an exponential factor of~$e^{-np^2}$ that was missing in previous work.
This in particular settles the main open problem for~$\chi_c(\Gnp)$  
due to McDiarmid, Mitsche and Pra{\l}at (see~\cite[Section~5]{MMP}). 
\begin{theorem}[Main result]\label{thm:main1} %
If the edge-probability~$p=p(n)$ satisfies~${n^{-0.6} \le p \le n^{-0.4}}$, 
then with high~probability the clique chromatic number of the random graph~$\Gnp$ satisfies 
\begin{equation}\label{eq:main1}
    \chi_c\bigpar{\Gnp} =  \tilde{\Theta}\biggpar{\max\biggcpar{\frac{e^{-np^2}np}{\log(np)}, \: \frac{p^{3/2}n}{\sqrt{\log n}}}},
\end{equation}
where the $\tilde{\Theta}$-notation suppresses polylogarithmic factors, i.e., extra factors of form~$(\log n)^{O(1)}$.
Furthermore, in~\eqref{eq:main1} the extra polylogarithmic factors are only needed when $1-o(1) \le 4np^2/\log n \le 2+o(1)$. 
\end{theorem}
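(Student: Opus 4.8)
The plan is to prove matching bounds, up to polylogarithmic factors, treating separately the regime where the first term $T_1:=e^{-np^2}np/\log(np)$ dominates the maximum in~\eqref{eq:main1} and the one where the second term $T_2:=p^{3/2}n/\sqrt{\log n}$ does. Throughout I would use that for $p=n^{-1/2+o(1)}$ the clique number of $\Gnp$ is bounded (by $5$), so every maximal clique has size in $\{2,3,4,5\}$, and that whp a given triangle is maximal with probability $1-o(1)$ (it has $np^{3}=o(1)$ common neighbours in expectation), while a given edge is maximal with probability $(1+o(1))e^{-np^{2}}$. A clique colouring is exactly a proper colouring of the hypergraph $\mathcal{H}$ whose hyperedges are the maximal cliques of $\Gnp$; in particular $\chi_c(\Gnp)\ge\chi(H_2)$, where $H_2\subseteq\Gnp$ is the graph of maximal edges (which is triangle-free, since any edge in a triangle of $\Gnp$ has a common neighbour), and more crudely $\chi_c(\Gnp)\ge(1-o(1))n/\alpha_c$ with $\alpha_c$ the largest size of a \emph{valid} set, i.e.\ a vertex set containing no maximal clique of $\Gnp$.

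For the lower bound I would bound $\alpha_c$ from two sides: a valid set has no maximal edge, so $\alpha_c\le\alpha(H_2)$ (binding when $T_1$ dominates), and it has no maximal triangle (binding when $T_2$ dominates). After a short moment computation confirming that in each regime almost all vertices lie in a maximal edge, resp.\ triangle, so that $n/\alpha_c$ has the right order, the core is to show that a fixed set $S$ of the critical size $s_1=\tilde{\Theta}(1/(pe^{-np^2}))$ contains a maximal edge except with probability $e^{-\Omega(\mu_1)}$, where $\mu_1:=\E[\text{number of maximal edges in }S]=\Theta(s_1^{2}pe^{-np^2})$, and analogously that a fixed set $S$ of size $s_2=\tilde{\Theta}(p^{-3/2})$ is not valid except with probability $e^{-\Omega(\mu_2)}$, where $\mu_2:=\E[\text{number of maximal triangles in }S]=\Theta(s_2^{3}p^{3})$; the exponents are tuned so that a union bound over all $\binom{n}{|S|}$ candidate sets still vanishes, yielding $\alpha_c=\tilde{O}(\min\{1/(pe^{-np^2}),\,p^{-3/2}\})$ and hence the claimed lower bound. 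I expect this probability estimate to be the main obstacle. The events ``$e$ is a maximal edge'' (resp.\ ``$T$ is a maximal triangle''), restricted to $S$, are pairwise dependent for \emph{every} pair --- any two are functions of the potential edges running between their vertex sets --- so the Janson dependency sum $\bar{\Delta}$ is of order $\mu^{2}$ (it is dominated by the $\Theta(\mu^{2})$ vertex-disjoint pairs, whose joint probability is essentially the product of the marginals), and Janson's inequality is vacuous; in previous work Janson was applied only to the probability that $G[S]$ is edge- or triangle-free, which is strictly stronger than failing to be valid (a valid $S$ may contain edges, or a few triangles, provided they all have common neighbours outside $S$), and plain second-moment or martingale concentration bounds are likewise too weak. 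To get around this I would expose the edges within $S$ and the edges from $S$ to $V\setminus S$ in two rounds: after the first round the set of pairs of $S$ having no common neighbour outside $S$ is deterministic, and a concentration inequality shows it has the expected size; conditionally on this, the number of maximal edges (resp.\ triangles) in $S$ is well-approximated by a sum of genuinely near-independent contributions with an exponentially small lower tail --- that is, one replaces the true dependent model by an essentially independent one at negligible cost. This is the ``mix of approximation and concentration arguments'' that goes beyond Janson.

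For the upper bound I would exhibit a valid colouring with $r:=(\log n)^{K}\max\{T_1,T_2\}$ colours for a suitable constant $K$. Starting from a uniformly random (hence whp almost balanced) $r$-colouring, a first-moment computation shows that whp the numbers of monochromatic maximal edges, triangles, $K_4$'s and $K_5$'s are each at most $n/(\log n)^{\Omega(1)}$ (here $r\ge T_1(\log n)^{\Omega(1)}$ controls the edges, $r\ge T_2(\log n)^{\Omega(1)}$ controls the triangles, and the larger cliques are rarer still). I would then destroy these one at a time by recolouring one vertex $v$ of each offending clique to a colour that creates no new monochromatic maximal clique through $v$; such a ``free'' colour exists because the number of colours blocked for $v$ (those used by both endpoints of some maximal edge at $v$, or by the remaining vertices of some maximal $K_4$ or $K_5$ at $v$) is whp only $o(r)$, again by the polylogarithmic slack, and since only $n/(\log n)^{\Omega(1)}$ vertices are recoloured in total --- keeping all classes of size $O(n/r)$, e.g.\ by always recolouring into a currently near-smallest free class --- these estimates persist throughout. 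This gives $\chi_c(\Gnp)=\tilde{O}(\max\{T_1,T_2\})$ and completes the $\tilde{\Theta}$ statement.

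Finally, the ``furthermore'' clause asks to upgrade $\tilde{\Theta}$ to $\Theta$ outside the window $1-o(1)\le4np^2/\log n\le2+o(1)$, which is exactly where one of $T_1,T_2$ beats the other by a polynomial factor. When $T_1$ dominates ($4np^2/\log n\le1-o(1)$), $H_2$ is triangle-free with maximum degree $(1+o(1))npe^{-np^2}=n^{\Omega(1)}$ and $\log(npe^{-np^2})=\Theta(\log(np))$, so Molloy's theorem for triangle-free graphs gives $\chi(H_2)\le(1+o(1))\Delta(H_2)/\log\Delta(H_2)=O(T_1)$, matched by $n/\alpha(H_2)=\Omega(T_1)$ from the sharpened lower bound; repairing the few monochromatic maximal triangles and $K_4$'s of a balanced near-optimal colouring of $H_2$ as above works without extra slack precisely when $4np^2/\log n\le1-o(1)$, yielding $\chi_c(\Gnp)=\Theta(T_1)$. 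When $T_2$ dominates ($4np^2/\log n\ge2+o(1)$), $H_2$ is essentially empty and one instead needs a partition of $V$ into $O(T_2)$ maximal-triangle-free (equivalently, here, triangle-free) induced subgraphs of size $\Theta(p^{-3/2}\sqrt{\log n})$, which I would build by a semi-random ``nibble'' construction, matched by $n/\alpha_c=\Omega(T_2)$; the polylogarithmic loss survives only in the transition window, where neither sharp argument is available.
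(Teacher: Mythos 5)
Your lower bound follows essentially the same route as the paper (which proves a single statement, Theorem~\ref{thm:lower}, about uncovered $k$-cliques in every $s$-set and then specializes to $k=2$ and $k=3$), and you correctly identify the obstruction: Janson and plain martingale/second-moment bounds only give failure probability $e^{-O(s)}$, while the union bound over the $\binom{n}{s}\le (ne/s)^s$ candidate sets needs roughly $e^{-\Omega(s\log n)}$. But the one step you leave unspecified is exactly the one carrying that burden. After exposing the $S$-to-outside edges you assert that ``a concentration inequality shows'' the number of uncovered pairs (resp.\ triples) has its expected order; yet this count is a function of the crossing edges whose worst-case single-edge effect is of order $s^{k-1}$ (an outside vertex may be adjacent to a large chunk of $S$), so bounded differences as such is useless, and your ``near-independent model'' remark attaches to the conditional step, which is the easy part (there Janson does work, as in Lemma~\ref{lem:manyedges}). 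The paper's mechanism, and the main technical novelty, is a degree truncation: for each outside vertex only its first $x=6sp$ edges into $S$ are marked, the surrogate count $X'_S$ of tuples not covered by marked edges has Lipschitz constant $\binom{x}{k-1}$, so the bounded differences inequality yields failure probability $p^{2s}$ (Lemma~\ref{lem:conc}), and a separate degree-counting argument bounds the approximation error $X'_S-X_S$ (Lemma~\ref{lem:approx}). Some device of this kind is needed for your two-round plan to close; as written, this is a genuine gap.

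Your upper bound is a genuinely different route: a random $r$-colouring followed by iterative local repair, versus the paper's partition into $r=\lceil p^{3/2}n/\sqrt{\log n}\rceil$ parts, the auxiliary graphs $G_i$ obtained by deleting from $\Gnp[S_i]$ the edges covered from outside $S_i$, a maximum-degree bound $O(\Gamma)$ (Lemma~\ref{lem Delta}), and the Joret--Micek--Reed--Smid bound $O(\Delta/\log\Delta)$ applied to each part. With polylogarithmic slack your scheme is plausible, although the claim that the blocked-colour estimates ``persist'' through the sequential recolouring is asserted rather than argued (a single vertex may accumulate many recoloured neighbours). The concrete failure is in the ``furthermore'' clause when $T_1$ dominates: a proper (Molloy) colouring of $H_2$ gives no control at all over monochromatic triangles, since every edge of a triangle of $\Gnp$ has a common neighbour and hence lies outside $H_2$; and your single-vertex repair cannot be carried out without slack, because $\Delta(H_2)=(1+o(1))npe^{-np^2}$ exceeds the colour budget $\approx\Delta(H_2)/\log\Delta(H_2)$, so at the vertex to be recoloured every colour may already be blocked by maximal edges alone. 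The paper avoids both issues because within each part it performs a clique colouring of $G_i$ (so triangles inside a part are handled by Theorem~\ref{thm:JMRS} rather than repaired), and it obtains the sharpened bounds in the outer regimes from part~\eqref{pt 3'} of Theorem~\ref{thm:upper} and the earlier bounds of McDiarmid--Mitsche--Pra{\l}at rather than from a repair step; your nibble-style suggestion for the $T_2$-dominant regime is in effect what the quoted MMP partition already provides.
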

The steep transition of the clique chromatic number~${\chi_c(\Gnp)}$ around~$p \approx n^{-1/2}$ 
is heuristically closely linked to the fact that~${\chi_c(G) \le \chi(G)}$ holds with equality for triangle-free graphs~$G$ (for which all edges are inclusion-maximal cliques). 
Indeed, it is well-known that `few' edges of~$\Gnp$ are in triangles when~${p \ll n^{-1/2}}$, 
in which case we loosely expect that whp~$\chi_c(\Gnp) \approx \chi(\Gnp) ={\Theta(np/\log(np))}$, see also~\cite[Theorem~1.3]{MMP}.
Furthermore, `most' edges of~$\Gnp$ are in triangles when~${p \gg n^{-1/2}}$, 
in which case we loosely expect that whp~$\chi_c(\Gnp) \ll \chi(\Gnp) ={\Theta(np/\log(np))}$. 
This discussion makes it plausible that around~$p=n^{-1/2+o(1)}$
 the difference between~$\chi_c(\Gnp)$ and~$np/\log(np)$ should be controlled by the 
probability~$(1-p^2)^{n-2} \approx e^{-np^2}$ that an edge is not contained in a triangle (i.e.,~forms an inclusion-maximal clique), 
as made precise by Theorem~\ref{thm:main1}.
Our proof obtains the key factor~$e^{-np^2}$ in~\eqref{eq:main1} using a delicate 
mix of approximation and concentration arguments, 
which enables us to go beyond Janson's inequality used in previous~work.

Combining Theorem~\ref{thm:main1} with previous work~\cite{MMP} we now know, for the first time, 
the typical value of~$\chi_c(\Gnp)$ up to~$n^{o(1)}$ factors for any~${p=p(n)}$. 
To improve our understanding of the clique chromatic number, 
it is desirable to reduce these gaps further, 
and our proof techniques are powerful enough to achieve this. 
Namely, 
Theorem~\ref{thm:main2} determines the typical value of~$\chi_c(\Gnp)$ up to logarithmic factors for any edge-probability~${p=p(n)}$.  
\begin{theorem}\label{thm:main2}
There is a constant~$C \ge 1$ such that, for any edge-probability~$p=p(n) \in [0,1]$, 
the clique chromatic number of the random graph~$\Gnp$ with high probability satisfies 
\begin{equation}\label{eq:main2}
\frac{\displaystyle\chi_c(\Gnp)}{\max\biggcpar{\displaystyle 1, \; \displaystyle\frac{e^{-np^2}np}{\log n}, \; \min\biggcpar{\displaystyle \frac{p^{3/2}n}{\sqrt{\log n}}, \: \displaystyle \frac{1}{p}}}} \: \in \: \Bigl[ \bigl(C \log n\bigr)^{-1} , \; C \log n\Bigr].
\end{equation}
\end{theorem}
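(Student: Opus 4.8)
\emph{Overall strategy.} Write $D$ for the denominator in \eqref{eq:main2}; it suffices to prove that whp $\chi_c(\Gnp) \le (\log n)^{O(1)}D$ and $\chi_c(\Gnp) \ge (\log n)^{-O(1)}D$. Since $D$ is a maximum, the lower bound reduces to treating each of its terms separately (the term $1$ giving only the trivial $\chi_c \ge 1$), and the upper bound may be split according to which term dominates $D$, i.e.\ according to the size of $p$. I would partition into the three ranges $p \le n^{-0.6}$, $\;n^{-0.6}\le p\le n^{-0.4}$, $\;p\ge n^{-0.4}$, and first record the elementary fact that, up to $(\log n)^{O(1)}$ factors, $D=\max\{1,np\}$ in the first range, $D$ equals the two‑term maximum of Theorem~\ref{thm:main1} in the second, and $D=\max\{1,1/p\}$ in the third — using $e^{-np^2}=1-o(1)$ for $p\le n^{-0.6}$, and $e^{-np^2}np=o(1)$ together with $1/p\le p^{3/2}n$ for $p\ge n^{-0.4}$, and keeping track of $e^{-np^2}$ only in the middle range.

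\emph{The sparse range $p\le n^{-0.6}$.} Here the target is $\tilde{\Theta}(\max\{1,np\})$. The upper bound is immediate from $\chi_c(\Gnp)\le\chi(\Gnp)=(1+o(1))\tfrac{np}{2\log(np)}=\tilde{O}(np)$ whp (the classical chromatic number of $\Gnp$); when $np=\tilde{O}(1)$ all components have size $O(\log n)$ and $\chi_c=\tilde{O}(1)$. For the lower bound one may invoke the corresponding estimate from~\cite{MMP}, or reprove it directly by a first‑moment argument over $k$‑colorings: the edges of $\Gnp$ lying in no triangle are maximal cliques and must be properly colored, and since $e^{-np^2}=1-o(1)$ in this range there are whp $\Theta(n^2p)$ of them, forcing some such edge to be monochromatic once $k=\tilde{o}(np)$; when $np=\tilde{O}(1)$ the bound $\chi_c\ge1$ suffices.

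\emph{The dense range $p\ge n^{-0.4}$.} Here the target is $\tilde{\Theta}(\max\{1,1/p\})$. For the upper bound I would use the inequality $\chi_c(G)\le\gamma_t(G)$, where $\gamma_t$ is the total domination number: given a set $\{v_1,\dots,v_t\}$ meeting the neighborhood of every vertex, color each vertex $u$ by $\min\{i:u\sim v_i\}$; then each color class lies inside some $N(v_j)$ and misses $v_j$, so any clique inside it extends by $v_j$ and hence is not maximal, and since $np\to\infty$ a random set of size $\lceil 3\log n/p\rceil$ is whp a total dominating set, giving $\chi_c=\tilde{O}(1/p)$. The lower bound is trivial when $p\ge 1/\log n$; for $n^{-0.4}\le p\le 1/\log n$ (so $p=n^{-\Theta(1)}$) one adapts the lower bound of~\cite{MMP}, upgraded to polylogarithmic precision via the concentration tools from the proof of Theorem~\ref{thm:main1} — the underlying mechanism being that a maximal‑clique‑free set of $\Gnp$ has size only $\tilde{O}(np)$, so any clique coloring uses $\ge n/\tilde{O}(np)=\tilde{\Omega}(1/p)$ colors.

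\emph{The transition range $n^{-0.6}\le p\le n^{-0.4}$, and the main obstacle.} Throughout this range one has $np^3=o(1)$ while $np^2$ sweeps from $o(1)$ to $n^{\Omega(1)}$, and $D=\tilde{\Theta}(\max\{e^{-np^2}np,\,p^{3/2}n\})$; here \emph{neither} of the two simple upper bounds above is tight, and I would invoke the proof of Theorem~\ref{thm:main1} directly, whose arguments apply verbatim on all of $[n^{-0.6},n^{-0.4}]$ (the condition $np^3=o(1)$ is precisely what makes the relevant maximal‑clique structure — maximal edges and maximal triangles — decouple cleanly from the coloring). On the upper side the coloring must simultaneously destroy the $\Theta(n^2 p\,e^{-np^2})$ maximal edges, which behave like a random graph of edge‑density $\approx p\,e^{-np^2}$ and are properly $\tilde{\Theta}(e^{-np^2}np)$‑colorable, and the $\Theta(n^3p^3)$ maximal triangles, costing $\tilde{\Theta}(p^{3/2}n)$ colors; on the lower side one runs a first‑moment‑over‑colorings argument using monochromatic maximal triangles. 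The crux — and, I expect, the chief difficulty of the whole theorem — is that $np^2$ is of order $\log n$ in the relevant subrange, so that the probability $(1-p^2)^{n-2}=(1+o(1))e^{-np^2}$ that an edge spans no triangle must be tracked with \emph{multiplicative} precision through both the coloring construction and the first‑moment lower bound, together with sufficiently tight concentration for the number of maximal edges and triangles inside a color class; ordinary second‑moment/Janson estimates are lossy by factors that would swamp the $e^{-np^2}$, which is why one must ``go beyond Janson'' as in Theorem~\ref{thm:main1}. Everything else — patching the three ranges at their endpoints and checking that the stray $e^{-np^2}$ and $(\log n)^{O(1)}$ factors land as in \eqref{eq:main2} — is routine bookkeeping.
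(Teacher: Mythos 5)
Your skeleton (case split in $p$, reuse of Theorem~\ref{thm:main1} in the transition window, $\chi_c\le\chi$ in the sparse range, a dominating-set-type upper bound in the dense range) matches the paper's proof, but your dense-range \emph{lower} bound has a genuine gap. After discarding $p\ge 1/\log n$ as trivial you assert that what remains is ``$n^{-0.4}\le p\le 1/\log n$ (so $p=n^{-\Theta(1)}$)'' and that one can ``adapt the lower bound of~\cite{MMP}''; but this range contains all $p=n^{-o(1)}$ with $1/p\gg\log n$ (e.g.\ $p=(\log n)^{-2}$ or $p=e^{-\sqrt{\log n}}$), where the trivial bound $\chi_c\ge 1$ is off by more than the allowed $C\log n$ factor and, as the paper points out, \cite{MMP} gives no non-trivial lower bound at all. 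Moreover the ``concentration tools from the proof of Theorem~\ref{thm:main1}'' as you describe them concern maximal edges and maximal triangles ($k=2,3$); in the dense regime almost every edge and triangle extends, and the witnesses must be inclusion-maximal cliques of size $k=\lceil\log_{1/p}n\rceil$, which tends to infinity exactly when $p=n^{-o(1)}$. Establishing that every vertex set of size $\tilde{O}(np)$ contains such an uncovered $k$-clique uniformly in growing $k$ is the content of the paper's Theorem~\ref{thm:lower} (whose hypotheses~\eqref{eq:thm:lower:p:upper:0}--\eqref{eq:thm:lower:p:upper} must be verified for this $k$, with a further case analysis $k=3,4,\ge 5$); nothing in your proposal supplies this, and it is precisely the ``no upper bound on $k$'' feature that the paper singles out as a key difference from~\cite{MMP}. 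The mechanism you name (maximal-clique-free sets have size $\tilde O(np)$, hence $\chi_c=\tilde\Omega(1/p)$) is the right one, but its proof in this regime is the missing hard step, not routine bookkeeping.

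Two smaller points. First, you reduce the theorem to showing the ratio is $(\log n)^{\pm O(1)}$, whereas \eqref{eq:main2} allows only a single factor $C\log n$; your individual range arguments mostly do give a single logarithm, but the transition range then requires invoking the \emph{proof} of Theorem~\ref{thm:main1} (which loses only $O(\log n/\log\log n)$ above and $O(\sqrt{\log n})$ below), not merely its $\tilde\Theta$ statement. Second, for $1\ll np\le\log n$ your claim that all components have size $O(\log n)$ is false (there is a giant component already for $np>1$); the conclusion $\chi_c=\tilde O(1)$ there is still easy, e.g.\ via $\chi_c\le\chi\le\Delta+1=O(\log n)$ or the paper's coupling $\Gnp\subseteq G_{n,p_0}$ with monotonicity of $\chi$, but as written that step does not stand.
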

In concrete words, Theorem~\ref{thm:main2} refines Figure~\ref{fig:upper_bound},  
by revealing how the clique chromatic number~$\chi_c(\Gnp)$ of the random graph~$\Gnp$ 
changes its typical behavior around~${p=n^{-1+o(1)}}$, ${p=n^{-1/2+o(1)}}$ and~${p=n^{-2/5+o(1)}}$.  
For certain specific ranges of~$p=p(n)$ sharper bounds than~\eqref{eq:main2} follow from Theorem~\ref{thm:main1} 
and the results in~\cite{MMP}, 
as well as the recent work of Alon and Krivelevich~\cite{AK17} and Demidovich and Zhukovskii~\cite{DK}; 
see also Section~\ref{sec:conclusion} for a conjecture that further refines the bounds of Theorem~\ref{thm:main2}.

\subsection{Organization of the paper}
The remainder of this paper is organized as follows. 
In the next subsection we state our main technical results, 
which give lower and upper bounds on the clique chromatic number~$\chi_c(\Gnp)$. 
Section~\ref{sec: lower bound}~and~\ref{sec: upper bounds} contain the proofs of these technical results, 
which in Section~\ref{sec:technical} are then used to deduce Theorem~\ref{thm:main1} and~\ref{thm:main2} via several case distinctions. 
The final Section~\ref{sec:conclusion} contains some brief concluding remarks and conjectures.

\subsection{Main technical results}\label{sec:maintechnical}
The main technical result of this paper is Theorem~\ref{thm:lower}, 
whose lower bound on the clique chromatic number~$\chi_c(\Gnp)$ 
intuitively comes from inclusion-maximal $k$-vertex cliques, see Section~\ref{sec: lower bound}.   
In Section~\ref{sec:technical} we will show that the lower bounds in Theorem~\ref{thm:main1} and~\ref{thm:main2} both follow from inequality~\eqref{eq:thm:lower:chi} 
by optimizing over all valid choices of~${k=k(n) \ge 2}$. 
For Theorem~\ref{thm:main1} it will be crucial to have the~${(1-p^k)^{n/(k-1)}} \approx {e^{-np^k/(k-1)}}$ factor in~\eqref{eq:thm:lower:chi} that was missing in previous work.
To cover the range~${p=n^{-o(1)}}$ in Theorem~\ref{thm:main2}, it will also be important to not impose any upper bound on~${k=k(n) \ge 2}$ beyond assumptions~\eqref{eq:thm:lower:p:upper:0}--\eqref{eq:thm:lower:p:upper}. 
Much of our proof efforts in Section~\ref{sec: lower bound} are devoted towards establishing these features, 
which are key conceptual differences to previous work~\cite{MMP}. 
\begin{theorem}[Main technical result]\label{thm:lower}
There is a constant~$C \ge e$ such that the following holds for any~$\tau \in (0,1)$. 
If an integer~${k=k(n) \ge 2}$ and the edge-probability~${p=p(n)}$ satisfy the two assumptions  
\begin{gather}
\label{eq:thm:lower:p:upper:0}
n^{-1+\tau} \; \le \; p \; \le \; (6 C)^{-1} \cdot \bigsqpar{k^2 \log(1/p) np}^{-1/2(k-1)} \cdot (1-p^k)^{n/(k-1)} , \\
\label{eq:thm:lower:p:upper}
\max\Bigcpar{np^2, \: p^{-(k-2)/2}} \cdot (1-p^k)^{n(k-2)/(k-1)} \; \ge \; \indic{k \ge 3} k^5 \log(1/p)  ,
\end{gather}
then with high~probability the clique chromatic number of the random graph~$\Gnp$ satisfies 
\begin{equation}\label{eq:thm:lower:chi}
    \chi_c\bigpar{\Gnp} \; \ge \; \frac{1}{C^2} \cdot \min\Biggcpar{\frac{1}{p}, \: \frac{p^{k/2}n}{\bigsqpar{k!k\log(1/p)}^{1/(k-1)} }} \cdot (1-p^k)^{n/(k-1)}.
\end{equation}
\end{theorem}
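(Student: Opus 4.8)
The plan is to derive \eqref{eq:thm:lower:chi} from a lower bound on an auxiliary \emph{hypergraph} chromatic number. Let $\mathcal{H}_k=\mathcal{H}_k(\Gnp)$ be the $k$-uniform hypergraph on $[n]$ whose edges are exactly the inclusion-maximal $k$-cliques of $\Gnp$, i.e.\ the $k$-sets $K$ that induce a clique with no common neighbour in $[n]\setminus K$. Any clique-colouring of $\Gnp$ leaves no inclusion-maximal clique monochromatic, so in particular no hyperedge of $\mathcal{H}_k$ is monochromatic; since each colour class is then an independent set of $\mathcal{H}_k$, we get $\chi_c(\Gnp)\ge n/\alpha(\mathcal{H}_k)$, where $\alpha(\mathcal{H}_k)$ is the largest size of a vertex set containing no inclusion-maximal $k$-clique. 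Hence it suffices to show that with high probability
\[
\alpha(\mathcal{H}_k)\ \le\ C^2\max\{np,\,s^\star\},\qquad\text{where}\qquad s^\star:=\Bigpar{\tfrac{k!\,k\log(1/p)}{p^{\binom k2}(1-p^k)^{n-k}}}^{1/(k-1)} ,
\]
because $s^\star$ is the threshold at which the expected number of inclusion-maximal $k$-cliques inside a fixed $s$-set drops below $s\cdot k\log(1/p)$, and substituting it (using $p^{\binom k2/(k-1)}=p^{k/2}$ and $(1-p^k)^{-k/(k-1)}=O(1)$ in the allowed range of $p$) turns $n/\max\{np,s^\star\}$ into the right-hand side of \eqref{eq:thm:lower:chi}; the $np$-floor is responsible for the $1/p$ term, and it is forced because the union bound $\binom ns\le(en/s)^{s}\le(e/(C^2p))^{s}$ can only be beaten once $s\gtrsim np$, so that $\log(en/s)\lesssim\log(1/p)$ matches $\E[X_S]/s\asymp k\log(1/p)$. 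The two hypotheses \eqref{eq:thm:lower:p:upper:0}--\eqref{eq:thm:lower:p:upper} are precisely what places $s^\star$ into the range where the following argument goes through.

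So fix $s:=\ceil{C^2\max\{np,s^\star\}}$ (if $s>n$ there is nothing to prove), fix a set $S$ of $s$ vertices, let $X_S$ count the inclusion-maximal $k$-cliques inside $S$, and aim for $\Pr[X_S=0]$ much smaller than $\binom ns^{-1}$. The difficulty — and the reason Janson's inequality (as used in~\cite{MMP}) does not apply to $X_S$ directly — is that ``$K$ is an inclusion-maximal $k$-clique'' is the intersection of an increasing event (``$K$ is a clique'') and a decreasing one (``$K$ has no common neighbour''), hence is neither increasing nor decreasing. I would get around this by exposing $\Gnp$ in two stages. \emph{Stage~1}: expose $\Gnp[S]\sim G_{s,p}$. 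A second-moment (or Janson) estimate for the number of $k$-cliques of $G_{s,p}$ — valid because \eqref{eq:thm:lower:p:upper} keeps the cluster term $\Delta$ of size $O(\E)$ — together with \eqref{eq:thm:lower:p:upper:0}, which forces $sp^{k}=o(1)$ so that only an $o(1)$-fraction of these cliques have a common neighbour \emph{inside} $S$, yields (with the required small failure probability) a family $\mathcal{Q}$ of at least $\tfrac14\binom sk p^{\binom k2}$ many $k$-cliques of $\Gnp[S]$, none extendable within $S$; condition on $\mathcal{Q}$.

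\emph{Stage~2}: expose the edges between $S$ and $T:=[n]\setminus S$. Then $\{X_S=0\}\subseteq\{\text{every }K\in\mathcal{Q}\text{ has a neighbour in }T\text{ adjacent to all of }K\}$, and I would bound this in two complementary regimes. If $np^{k}=O(1)$: greedily extract from $\mathcal{Q}$ a sub-family of $r\gtrsim s/k^{2}$ pairwise \emph{vertex-disjoint} $k$-cliques $K_1,\dots,K_r$; the events ``$K_i$ is extended by $T$'' depend on disjoint edge sets $K_i\times T$, hence are independent, so the probability above is at most $\bigpar{1-(1-p^k)^{|T|}}^{r}\le\exp\bigpar{-\Omega\bigpar{s(1-p^k)^{n}/k^{2}}}$, which — since $s^\star(1-p^k)^{n}\gtrsim p^{-k/2}(1-p^k)^{n(k-2)/(k-1)}\bigpar{k\log(1/p)}^{1/(k-1)}$ is controlled via \eqref{eq:thm:lower:p:upper}, and $s\asymp C^{2}s^\star$ — dominates $\binom ns$ once $C$ is a sufficiently large absolute constant. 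In the complementary regime $(1-p^k)^{n}=e^{-(1+o(1))np^{k}}$ is small, and there I would argue via moments of $X_S\ge\abs{\{K\in\mathcal{Q}:K\text{ not extended by }T\}}$ itself (a second-moment/switching computation), with \eqref{eq:thm:lower:p:upper} used to show that the clustering contribution $\sum_{j\ge1}\bigpar{\#\{K,K'\in\mathcal{Q}:|K\cap K'|=j\}}\,p^{2k-j}$ is of lower order; this is the step that genuinely goes ``beyond Janson'' and requires the interplay of approximation (replacing maximality probabilities $(1-p^k)^{n-|K|}$ by $e^{-np^k}$, and $\binom sk$ by $s^k/k!$, etc.) and concentration mentioned in the introduction.

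The step I expect to be the main obstacle is exactly this last one: establishing a strong enough bound on $\Pr[X_S=0]$ in the window where $np^{k}$ is moderately large (for $k=2$ this is roughly $1\lesssim np^{2}\lesssim\log n$, which contains the delicate range $4np^2/\log n\in[1,2]$ flagged in Theorem~\ref{thm:main1}), since there the good family $\mathcal{Q}$ must be destroyed solely by thin-tailed non-extension events for which neither FKG nor Janson suffices, so one is forced to combine the two-stage exposure with delicate moment estimates. Everything else — the reduction to $n/\alpha(\mathcal{H}_k)$, the concentration of clique counts in $G_{s,p}$, the greedy extraction of vertex-disjoint cliques, the bookkeeping that turns $n/\max\{np,s^\star\}$ into \eqref{eq:thm:lower:chi}, and verifying that \eqref{eq:thm:lower:p:upper:0}--\eqref{eq:thm:lower:p:upper} give $sp^{k}=o(1)$, $\Delta=O(\E)$ and $s\le n$ in the relevant regime — is routine by comparison.
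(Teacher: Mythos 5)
Your reduction (every $s$-set whp contains an inclusion-maximal clique, hence $\chi_c(\Gnp)\ge n/s$, with $s\asymp C^2\max\{np,s^\star\}$ and your $s^\star$ agreeing with the paper's choice of $s$ up to constants) and your diagnosis that Janson does not apply directly because maximality mixes an increasing with a decreasing event are both on target. The genuine gap is in your Stage~2, and it is larger than the one step you flag. Each fixed $s$-set must fail with probability about $p^{2s}=e^{-2s\log(1/p)}$, because the union bound runs over $\binom{n}{s}\le (en/s)^s$ sets and, since $s\ge Cnp$, $\log(en/s)$ is genuinely of order $\log(1/p)$ (which can be $\Theta(\log n)$). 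In your regime $np^k=O(1)$ you extract $r$ vertex-disjoint cliques to gain independence; but $r\le s/k$ and $(1-p^k)^{|T|}\le 1$, so the exponent $r\,(1-p^k)^{|T|}$ is at most $s/k$, i.e.\ you obtain at best $e^{-O(s/k)}$, short of the required $e^{-\Omega(s\log(1/p))}$ by a factor $k\log(1/p)$ in the exponent. Concretely, for $k=2$ and $p=n^{-0.55}$ your bound is $e^{-O(s)}$ against $\binom{n}{s}\approx e^{0.45\,s\ln n}$, so the union bound fails; your claim that the exponent "dominates $\binom{n}{s}$ for large $C$" would need $(1-p^k)^n\gg k^2\log(1/p)$, which is impossible since $(1-p^k)^n\le 1$. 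In the complementary regime, a second-moment/switching computation can only give polynomially small failure probabilities, nowhere near $p^{2s}$; you flag this honestly, but it is not a finishing detail --- together with the quantitative failure of your first regime it is the entire content of the theorem.

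The paper's resolution, which is the idea missing from your outline, is to reverse the exposure order and to change what is counted so that the quantity being concentrated is polynomially large in $s$. First expose all edges except those inside $S$ and let $X_S$ count \emph{uncovered $k$-sets} (no clique requirement), so $\E X_S\ge(1-o(1))\,s^k(1-p^k)^n/k!$. The lower-tail bound $\Pr(X_S\le \tfrac12\E X_S)\le p^{2s}$ is obtained neither from Janson nor from plain bounded differences (both give only $e^{-O(s)}$, as the paper notes), but from a degree-truncated proxy $X'_S\ge X_S$ in which each outside vertex only "marks" its first $x=6sp$ edges into $S$; this makes the Lipschitz constant $\binom{x}{k-1}$, and the bounded differences inequality yields an exponent of order $\E X'_S/\binom{x}{k-1}$, which assumption \eqref{eq:thm:lower:p:upper:0} (bounding $(6p)^{k-1}$ against $(1-p^k)^n$) converts into at least $2s\log(1/p)$; a separate degree-counting argument shows $X'_S-X_S\le\gamma\E X_S$ outside a $p^{\Omega(s)}$-probability event. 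Only then are the edges inside $S$ exposed, and Janson's inequality, applied conditionally to the cliques among the uncovered $k$-sets, gives $\Pr(Y_S=0\mid X_S\ge\tfrac12\E X_S)\le p^{2s}$, with assumption \eqref{eq:thm:lower:p:upper} controlling the cluster term. Without this device (or some substitute that beats the $e^{-O(s)}$ barrier for the covering events), your two-stage plan does not go through.
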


We complement Theorem~\ref{thm:lower} with the following upper bound on the clique chromatic number~$\chi_c(\Gnp)$, 
which in Section~\ref{sec:technical} will be used (together with previous work) to establish the upper bounds in Theorem~\ref{thm:main1} and~\ref{thm:main2}. 
For Theorem~\ref{thm:main1} it will be crucial to have the~$e^{-np^2}$ factor in~\eqref{eq:thm:upper:chi:1} that was missing in previous work.
We mainly included part~\eqref{pt 3'} of Theorem~\ref{thm:upper} to demonstrate that that one can sharpen part~\eqref{pt 1'} for certain ranges of~${p=p(n)}$: 
in particular, for~${1+\Omega(1) \le 2np^2/(\log n) \le 6}$ we infer from~\eqref{eq:thm:upper:chi:2} that whp~${\chi_c(\Gnp)}={O(p^{3/2}n/\sqrt{\log n)}}$, 
which sharpens~\eqref{eq:thm:upper:chi:1} and also allows us to weaken the corresponding assumption~${2np^2/(\log n) \ge 4}$ of~\cite[Theorem~3.1]{MMP}. 
\begin{theorem}\label{thm:upper}
There there are constants~$C_1,C_2>0$ such that the following holds.%
\vspace{-0.5em}\begin{enumerate}[(i)]
\partopsep=0pt \topsep=0pt \parskip0pt \parsep0pt \itemsep0.25em
    \item\label{pt 1'} If~$(\log n) n^{-2/3}\le p\le n^{-1/3}/\log n$, then with high probability 
\begin{equation}\label{eq:thm:upper:chi:1}
    \chi_c\bigpar{\Gnp} \; \le \; C_1 \cdot \max\biggcpar{\frac{e^{-np^2}np}{\log n}, \: \frac{p^{3/2}n}{\sqrt{\log n}}} \cdot \frac{\log n}{\log \Gamma},
\end{equation}
where~$\Gamma := \max\bigl\{e^{-np^2}p^{-1/2}\sqrt{\log n}, \: \log n\bigr\}$ satisfies~${\log \log n \le \log \Gamma = O(\log n)}$.  
    \item\label{pt 3'} If~$\sqrt{(2^{-1}\log n + \log \log n)/n}\le p\le \sqrt{3 (\log n)/n}$, then with high probability 
\begin{equation}\label{eq:thm:upper:chi:2}
    \chi_c\bigpar{\Gnp} \; \le \; C_2 \cdot \frac{p^{3/2}n}{\sqrt{\log n}} \cdot \frac{\log n}{\xi},
\end{equation}
where~$\xi := {2np^2-(\log n+\log\log n)}$ satisfies~${\log \log n \le \xi < 5 \log n}$.%
\end{enumerate}
\end{theorem}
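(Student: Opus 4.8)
The plan is to build a clique colouring greedily. Call a set $I \subseteq V$ \emph{good} if $\Gnp[I]$ contains no clique that is inclusion-maximal in~$\Gnp$; that is, every edge of $\Gnp[I]$ lies in a triangle of~$\Gnp$, every triangle of $\Gnp[I]$ lies in a~$K_4$, and so on. Any partition of~$V$ into good sets is a clique colouring, so it suffices to cover~$V$ by few good sets. I would show that whp \emph{every} $W \subseteq V$ with $|W| \ge w_0$ contains a good set of size at least~$\beta$, for suitable $w_0,\beta$; repeatedly extracting such good sets --- and disposing of the sparse residual cheaply (iterating the argument if need be) --- then yields $\chi_c(\Gnp) \le O(n/\beta)$. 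The two parts will follow by reading off~$\beta$: essentially $\beta \asymp \log\Gamma / q$ with $q := p(1-p^2)^{n-2}$ for part~\eqref{pt 1'}, and $\beta \asymp \xi / (p^{3/2}\sqrt{\log n})$ for part~\eqref{pt 3'}.

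First I would discard the maximal cliques of size at least~$3$ at negligible cost. In the stated ranges one has $p$ well below~$n^{-1/3}$ (and near~$n^{-1/2}$ for~\eqref{pt 3'}), so whp $\omega(\Gnp)=O(1)$, and a routine first-moment/union bound gives that whp every $m$-vertex set spans at most $O\bigpar{m^s p^{\binom s2}}$ cliques inclusion-maximal in~$\Gnp$, for each fixed $3 \le s \le \omega(\Gnp)$. Hence, given a set $I$ that is independent in the \emph{maximal-edge graph}
\[
H \; := \; \bigcpar{uv \,:\, uv \in E(\Gnp) \text{ and } N_{\Gnp}(u) \cap N_{\Gnp}(v) = \emptyset},
\]
one may delete a single vertex from each clique inside~$I$ that is inclusion-maximal in~$\Gnp$ and has size at least~$3$; this removes only $O\bigpar{|I|^3 p^3}$ vertices (negligible against the target sizes~$\beta$) and leaves a good set of size $(1-o(1))|I|$. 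So the whole problem reduces to guaranteeing large independent sets of~$H$ inside every large subset. When $q$ is tiny, as in part~\eqref{pt 3'}, $H$ is essentially empty and the binding condition instead becomes guaranteeing a large \emph{triangle-free-in-$\Gnp$} subset of~$W$, of size $\asymp \xi/(p^{3/2}\sqrt{\log n})$, which a first-moment/deletion argument supplies.

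The heart of the matter is the analysis of~$H$, which should behave like the binomial random graph $\Gnpp{q}$ with $q = p(1-p^2)^{n-2} = (1+o(1))\,p\,e^{-np^2}$. Exposing first the edges of $\Gnp$ inside a fixed set~$S$ and then the rest, one checks that $\Pr\bigsqpar{S \text{ is independent in } H}$ equals $(1-q)^{\binom{|S|}2}$ up to lower-order corrections coming from pairs of incident edges of $\Gnp[S]$ (which are rare when~$S$ is not too large). From this approximation I would run the classical machinery for the chromatic number of random graphs --- a second-moment estimate for the count of large independent sets, a vertex-exposure martingale to promote it to concentration, and a union bound over all~$W$ --- to get that whp every $W$ with $|W| \ge w_0$ satisfies $\alpha(H[W]) \ge (1-o(1))\tfrac{2\log(|W|q)}{q}$. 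Choosing $w_0$ a bit above $\sqrt{\Gamma}/q$ makes this at least $\beta \asymp \log\Gamma/q$, and the range hypotheses (which keep $\Gamma,\xi$ in the stated useful ranges and keep $\omega(\Gnp)=O(1)$) then force $n/\beta$ to agree with the right-hand sides of~\eqref{eq:thm:upper:chi:1}--\eqref{eq:thm:upper:chi:2}, up to the constants~$C_1,C_2$.

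The main obstacle is exactly this concentration step for~$H$. The event that a given pair~$uv$ is a maximal edge of~$\Gnp$ is the intersection of the increasing event $\{uv \in E\}$ with the \emph{decreasing} event $\{N_{\Gnp}(u) \cap N_{\Gnp}(v) = \emptyset\}$, hence is not monotone, so Janson's inequality --- which in~\cite{MMP} was enough for the monotone, independence-number-type estimates there --- cannot bound $\Pr\bigsqpar{S \text{ independent in } H}$, let alone the correlations between these events for overlapping sets~$S$ that the second-moment method requires. This is where the announced mix of approximation and concentration has to do the work: the dependence created by common neighbours must be handled directly --- via the two-round exposure above together with a careful second-moment/martingale analysis --- while simultaneously tracking the competition between maximal edges (behind the $e^{-np^2}np$ term) and maximal triangles (behind the $p^{3/2}n$ term), so that the single bound~\eqref{eq:thm:upper:chi:1} holds across the whole stated range of~$p$; the residual clean-up and the part-\eqref{pt 3'} variant are then essentially bookkeeping.
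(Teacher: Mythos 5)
Your route (greedily extracting ``good'' sets that contain no clique which is inclusion-maximal in~$\Gnp$, reduced to independence numbers of the maximal-edge graph~$H$) is genuinely different from the paper's, but as written it has a real gap precisely at the step you flag as the obstacle and then assert away. The statement you need is of the form ``whp \emph{every} $W$ with $|W|\ge w_0$ satisfies $\alpha(H[W])\ge\beta$,'' and this must survive a union bound over roughly $\binom{n}{w_0}$ sets, i.e., the per-set failure probability must be $e^{-\omega(w_0\log(n/w_0))}$. For a genuinely binomial graph this is delivered by Janson-type lower-tail bounds for counts of independent sets; but the events ``$uv$ is a maximal edge'' are non-monotone and dependent, so those inequalities do not apply to $H$, and the replacement you name does not do the job: a vertex-exposure martingale applied to $\alpha(H[W])$ (Lipschitz constant~$1$) gives tails of order $e^{-O(t^2/|W|)}$ with $t\le\beta\ll\sqrt{|W|\log\binom{n}{w_0}}$, which is far too weak, and the ``two-round exposure'' only estimates $\Pr[S\text{ independent in }H]$ for a single $S$, not the joint/lower-tail behaviour the second-moment-plus-concentration scheme requires. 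This is exactly the kind of difficulty the paper confronts only in its \emph{lower} bound (via a bespoke truncation and bounded-differences argument); for the upper bound the paper avoids it entirely by partitioning $[n]$ into $r=\lceil p^{3/2}n/\sqrt{\log n}\rceil$ parts, pruning each induced graph to the locally relevant edges, bounding its maximum degree by $O(\Gamma)$ with plain Chernoff estimates, and invoking the Joret--Micek--Reed--Smid bound $\chi_c(G)=O(\Delta/\log\Delta)$ \cite{JMRS}; part~(ii) similarly reuses the triangle-free colour classes from \cite{MMP} and properly colours the low-degree graph of edges not in triangles. So the key lemma your plan rests on is not established, and no argument of comparable strength is sketched.

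Two secondary steps also fail as stated. First, the ``routine first-moment/union bound'' that whp every $m$-set spans $O(m^sp^{\binom{s}{2}})$ maximal $s$-cliques is false: taking $W$ to contain a vertex together with its whole neighbourhood (size $\approx np\le m$ in the relevant regimes) yields $\Theta(n^2p^3)$ triangles inside $W$, which exceeds $(mp)^3$ for much of the range $(\log n)n^{-2/3}\le p\le n^{-1/3}/\log n$; since your candidate good sets are only known to be independent in $H[W]$ and are otherwise arbitrary, the one-vertex-per-clique deletion needs an all-sets bound of roughly this strength, so this reduction is not justified (and near $p\approx n^{-1/3}$ an adversarial $m$-set can even contain $\omega(m)$ maximal triangles). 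Second, the residual bookkeeping does not close: with $w_0\approx\sqrt\Gamma/q$ (or even the smallest threshold $\approx(\log n)/q$ compatible with any union bound over $w_0$-sets), singleton-colouring the leftover costs more than the right-hand side of \eqref{eq:thm:upper:chi:1} in parts of the range (e.g.\ $nq^2\ll\sqrt\Gamma\log\Gamma$ already for $p=n^{-0.55}$), so ``iterating the argument'' bottoms out and an additional idea (sparseness/degeneracy of small sets, or a separate colouring of the leftover) is needed and must be checked against the stated bound, including near the boundary where $\Gamma\approx\log n$. In short, the plan is a plausible alternative architecture, but its central concentration lemma and two of its supporting claims are unproven or incorrect as stated, whereas the paper's partition-plus-maximum-degree argument sidesteps all of these issues.
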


\pagebreak[2]

\section{Lower bound: proof of Theorem~\ref{thm:lower}}\label{sec: lower bound} %
In this section we prove Theorem~\ref{thm:lower}, i.e., our main lower bound result.
For suitable choice of~$s$, our proof strategy will be to show that in the random graph~$\Gnp$ 
whp any vertex set~$S \subseteq [n]$ of size~$|S|=\ceil{s}$ contains at least one copy of a $k$-vertex clique~$K_k$ 
whose vertex-set~$\{v_1,\ldots, v_k\} \in \binom{S}{k}$ that has no common neighbor outside of~$S$ 
(i.e., for which there exists no vertex $w \in [n] \setminus S$ that is simultaneously adjacent to all the vertices~$v_j$).
This implies that every set of~$\ceil{s}$ vertices contains an inclusion-maximal clique, 
so that the color classes of any valid clique coloring can have at most~$\floor{s} \le s$ vertices.
Hence~$\chi_c(\Gnp)\ge n/s$, which for the `size'~parameter
\begin{equation}\label{def:s}
s \; := \; C \cdot \max \Bigcpar{np, \: p^{-k/2}\bigsqpar{k!k\log(1/p)}^{1/(k-1)}} \cdot (1-p^k)^{-n/(k-1)} 
\end{equation}
establishes the desired inequality~\eqref{eq:thm:lower:chi} 
with room to~spare (deferring our choice of the constant~$C \ge e$).
 
Note that the trivial inequality~${\chi_c(\Gnp) \ge 1}$ already implies the desired lower bound~\eqref{eq:thm:lower:chi}
when the right-hand side of~\eqref{eq:thm:lower:chi} is at most~one. 
In our proof of Theorem~\ref{thm:lower} we henceforth can thus safely assume~that 
\begin{equation}\label{eq:thm:lower:extra}
\min\Biggcpar{\frac{1}{p}, \: \frac{p^{k/2}n}{\bigsqpar{k!k\log(1/p)}^{1/(k-1)} }} \cdot (1-p^k)^{n/(k-1)} \; \ge \; C^2 ,
\end{equation}
which together with assumption~\eqref{eq:thm:lower:extra} ensures that~$C n^{\tau} \le s \le n/C$.
To avoid clutter, we shall always tacitly assume that~$n$ and thus~$s \ge n^{\tau}$ are sufficiently large whenever necessary, 
and also treat~$s$ as an integer (the rounding to integers has negligible impact on our calculations).
Since assumption~\eqref{eq:thm:lower:extra} implies\footnote{To see the claimed upper bound on~$k \ge 2$, 
note that~$(1-p^k)^{n/(k-1)} \le 1$ and~\eqref{eq:thm:lower:extra} imply~$1/p \ge C^2 \ge e$, 
which in turn ensures that~\eqref{eq:thm:lower:extra} also implies~$p^{k/2}n \ge C^2 \ge 1$ and thus~$k \le 2\log(n)/\log(1/p)=2\log_{1/p}n$.} that~$k \le 2\log_{1/p}n = o(\sqrt{s})$, 
we may also use~$\tbinom{s}{k} \sim s^k/k!$ and similar asymptotic approximations without further~justifications. 

Turning to the details of our two-step proof approach for Theorem~\ref{thm:lower}, 
we call a $k$-vertex set $\{v_1,\ldots, v_k\} \in \binom{S}{k}$ covered if there is a vertex~$w \in [n]\setminus S$ which is a common neighbor of all the vertices~$v_j$ in~$\Gnp$.
Given a vertex set~$S \subseteq [n]$ of size~$|S|=s$, 
let $X_S$~denote the number of $k$-vertex sets ${\{v_1,\ldots, v_k\} \in \binom{S}{k}}$ which are not covered, 
and let~$Y_S$ denote the number of $k$-vertex cliques~${\{v_1,\ldots, v_k\} \in \binom{S}{k}}$ in~$\Gnp$ whose vertex set is not~covered. 
The first step of our proof approach is to prove that many $k$-vertex sets in~$S$ are not covered, i.e., that~$X_S$ is large; see Lemma~\ref{lem:manypairs}.
With this key concentration result in hand, the second step is to prove that at least one uncovered $k$-vertex set in~$S$ also forms a clique in~$\Gnp$, i.e., that~$Y_S$ is non-zero; see Lemma~\ref{lem:manyedges}.
\begin{lemma}\label{lem:manypairs}
For any~$\delta \in (0,1)$ there is~$C_\delta \ge e$ such that, for any $C \ge C_\delta$, whp the following event~$\fC_\delta$ holds: 
we have ${X_S \ge \delta \E X_S}$ for all vertex sets~$S \subseteq [n]$ of size~$|S|=s$.
\end{lemma}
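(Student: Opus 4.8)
The plan is to prove Lemma~\ref{lem:manypairs} by a two-stage argument: first show that $\E X_S$ has the expected order of magnitude, then establish concentration of $X_S$ around its mean simultaneously for \emph{all} $s$-sets $S$ via a union bound over the $\binom{n}{s}$ choices of $S$, which forces us to beat the trivial $\binom{n}{s} \le 2^n$ factor with a tail bound that is exponentially small in something like $n^{1+\Omega(\tau)}$. First I would compute $\E X_S$: a fixed $k$-set $\{v_1,\dots,v_k\} \in \binom{S}{k}$ is \emph{not} covered precisely when none of the $n-s$ vertices outside $S$ is adjacent to all $k$ of them, which happens with probability $(1-p^k)^{n-s}$; hence $\E X_S = \binom{s}{k}(1-p^k)^{n-s}$, and using $k = o(\sqrt s)$ and $s \le n/C$ one checks $\binom{s}{k} \sim s^k/k!$ and $(1-p^k)^{n-s} = (1-p^k)^{n(1+o(1))}$, so $\E X_S = (1+o(1)) \tfrac{s^k}{k!}(1-p^k)^{n}$, which via the definition~\eqref{def:s} of $s$ is polynomially large in $n$ (at least $n^{\Omega(\tau)}$).

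The heart is the concentration. For a \emph{fixed} $S$, $X_S = \sum_{T \in \binom{S}{k}} \indic{T \text{ uncovered}}$ is a function of the edges between $S$ and $[n] \setminus S$ only; these are independent across the $n-s$ outside vertices $w$, since whether a given $w$ covers some $T \subseteq S$ depends only on the edges from $w$ into $S$. So I would write $X_S$ as a function of the $n-s$ independent "neighborhood vectors" $(\indic{wv} : v \in S)$, one per outside vertex $w$, and apply a bounded-differences / Azuma--Hoeffding inequality. The obstacle is that changing one vertex $w$'s neighborhood into $S$ can flip the covered/uncovered status of as many as $\binom{|N(w) \cap S|}{k}$ sets $T$, and $|N(w)\cap S|$ is typically $\approx sp$, so the naive Lipschitz constant per coordinate is of order $(sp)^k/k!$, which is far too large for McDiarmid's inequality to give a tail beating $2^n$. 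The standard fix — which I expect the authors use — is a \emph{typical bounded differences} argument (à la Warnke, or Kutin's method, or a two-step exposure): condition on the high-probability event that every outside vertex $w$ has $|N(w)\cap S| \le (1+o(1))sp$ (a Chernoff bound plus union bound over $w$ and $S$ handles this, as $sp = n^{\Theta(1)}$), and more importantly bound the number of $k$-sets $T$ whose covered-status actually \emph{depends} on $w$ — such $T$ must lie inside $N(w) \cap S$, but for $X_S$ to change, $T$ must additionally be covered by \emph{no other} outside vertex, and the expected number of such "vulnerable" sets is small. One then uses a martingale inequality with these typical (rather than worst-case) Lipschitz bounds to get $\Pr[X_S < \delta \E X_S] \le \exp(-n^{1+\Omega(\tau)})$ or similar, comfortably beating the $\binom{n}{s} \le e^{n}$ union-bound cost.

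Concretely, the steps I would carry out are: (1) establish $\E X_S = (1+o(1)) s^k/k! \cdot (1-p^k)^n$ and record that this is $n^{\Omega(\tau)}$ using~\eqref{def:s} and~\eqref{eq:thm:lower:extra}; (2) fix $S$ and exhibit $X_S$ as a function of the independent bipartite edge-sets $\{E(w,S)\}_{w \notin S}$; (3) prove a "good event" $\cG_S$ — all outside degrees into $S$ are $\le 2sp$, and the number of $k$-subsets of $S$ covered by at most one outside vertex is at most (say) $2\E X_S$ or some slowly-growing multiple thereof — holds with probability $\ge 1 - \exp(-\omega(n))$; (4) on $\cG_S$, bound the per-coordinate influence of $w$ on $X_S$ by the number of $k$-sets in $N(w)\cap S$ that are "singly covered", which the good event controls, and feed this into a typical-bounded-differences martingale concentration inequality to get the one-sided bound $X_S \ge \delta \E X_S$ with failure probability $\le \exp(-\omega(n))$; (5) union bound over all $\binom{n}{s}$ sets $S$. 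The single hardest point is step~(3)--(4): correctly identifying the right "typical Lipschitz" quantity and showing it is small enough — worst-case influences of size $(sp)^k$ are fatal, so the whole proof hinges on the observation that a $k$-set only contributes to a change when it is covered by $\le 1$ outside vertex, an event of probability $\approx (1-p^k)^n \cdot \bigl(1 + np^k\bigr)$, making the relevant count comparable to $\E X_S$ rather than to $\binom{sp}{k}$.
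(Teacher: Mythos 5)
Your overall strategy -- replace the fatal worst-case Lipschitz constant of order $\binom{sp}{k}$ by a ``typical'' influence and then union bound over all $s$-sets $S$ -- is the same circle of ideas the paper uses, but the specific mechanism you propose (condition on a good event consisting of ``$|N(w)\cap S|\le 2sp$ for all outside $w$'' plus a bound on the singly-covered $k$-sets, then apply a typical-bounded-differences/Kutin-type martingale inequality) has a genuine quantitative gap. The per-$S$ failure probability you must achieve is of order $p^{2s}=e^{-2s\log(1/p)}$, since $\binom{n}{s}\le (en/s)^s\le p^{-s}$ with $s$ as in \eqref{def:s}. Every typical-bounded-differences inequality carries an additive correction term proportional to the probability that the conditioning event fails (multiplied by further factors involving the worst-case constants), and your degree event fails with probability about $n e^{-\Omega(sp)}$, where $sp=x/6$ can be as small as $\Theta(\log n)$ (e.g.\ $k=2$, $p\approx n^{-1/2}$); this is astronomically larger than $p^{2s}$, because $p=o(1)\ll \log(1/p)$, so $\binom{n}{s}\cdot\Pr(\neg\cG_S)\to\infty$ and the union bound collapses. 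Your claim that the good event holds with probability $1-\exp(-\omega(n))$ is therefore not correct, and it cannot be rescued by making the event global (``every $w$ has at most $2sp$ neighbours in every $s$-set''): that statement is deterministically false, since $s\ge Cnp$ by \eqref{def:s}, so one may choose $S\supseteq N(w)$ and get $|N(w)\cap S|\approx np\gg 2sp$. The second half of your good event (controlling the vulnerable $k$-sets inside each $N(w)\cap S$) suffers from the same problem, as it again hinges on degree control and on upper tails that are nowhere near $p^{2s}$-small.

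The paper's proof avoids exactly this trap by building the truncation into the \emph{definition} of an auxiliary variable rather than conditioning on it: each outside vertex gets only its lexicographically first $x=6sp$ edges into $S$ marked (see \eqref{def:x}), and $X'_S\ge X_S$ counts the $k$-sets not ``safely covered'' by marked edges. Then flipping a single edge changes $X'_S$ by at most $\binom{x}{k-1}$ \emph{unconditionally}, so the plain bounded differences inequality (Lemma~\ref{typical BDI}) yields the clean per-$S$ tail $p^{2s}$ in Lemma~\ref{lem:conc}, with no $\Pr(\neg\Gamma)$-type correction at all. The cost of truncation is paid separately in Lemma~\ref{lem:approx}: outside vertices with more than $x$ neighbours in $S$ are grouped into dyadic classes $V_{i,S}$, whose sizes are bounded by counts $z_i$ with per-$S$ failure probability $(en/s)^{-3s}$ -- small enough for the union bound precisely because it bounds \emph{how many} high-degree vertices there are rather than forbidding them -- and on that whp event the deterministic estimate $X'_S-X_S\le\gamma\E X_S$ follows from \eqref{def:s}. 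Your expectation computation in step (1) matches the paper, but to make steps (3)--(4) work you would need to replace the conditioning by such a truncation-by-definition (or an equivalent device); as written, the concentration step fails.
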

\begin{lemma}\label{lem:manyedges}
There exists~$C \ge e$ such that whp the following event~$\fS$ holds: 
we have ${Y_S \ge 1}$ for all vertex sets~$S \subseteq [n]$ of size~$|S|=s$.
\end{lemma}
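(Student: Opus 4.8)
The plan is to prove Lemma~\ref{lem:manyedges} by showing that, conditionally on the event~$\fC_\delta$ from Lemma~\ref{lem:manypairs} (applied with some fixed~$\delta$, say~$\delta = 1/2$), the random variable~$Y_S$ is nonzero for every~$S$ with high probability, and then taking a union bound over the~$\binom{n}{s}$ choices of~$S$. The key point is that the event~$\fC_\delta$ gives us, for each fixed~$S$, a \emph{deterministic} lower bound on the number~$X_S$ of uncovered $k$-vertex subsets of~$S$; since covering depends only on edges incident to~$[n]\setminus S$, while forming a clique depends only on edges inside~$S$, these two events are independent. So after conditioning on the edges between~$S$ and~$[n]\setminus S$ (which determines which $k$-sets in~$S$ are covered), we are left with a clean question: among a known collection of at least~$\delta \E X_S$ many $k$-subsets of~$S$, how likely is it that at least one spans a clique in the random graph~$\Gnp[S]$ on the vertex set~$S$?

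First I would pin down the size of~$\E X_S$. Each $k$-set in~$\binom{S}{k}$ is uncovered with probability~$(1-p^k)^{n-s} \approx e^{-np^k}$, so~$\E X_S \sim \binom{s}{k}(1-p^k)^{n-s} \approx (s^k/k!)\,e^{-np^k}$, and with~$s$ as in~\eqref{def:s} this is comfortably large. Then I would estimate the probability that a given family~$\mathcal{F}$ of~$m \ge \delta\E X_S$ many $k$-subsets of~$S$ contains no clique. The natural tool here is the Janson-type lower-tail / second-moment machinery, or more robustly a direct large-deviation argument: the expected number of cliques among~$\mathcal{F}$ is~$\mu := m\,p^{\binom{k}{2}}$, and I would want to show this is large (at least polynomial in~$n$, certainly~$\gg \log\binom{n}{s} = \Theta(s\log n)$) and that the dependencies are controlled. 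The subtle issue is clustering: two $k$-sets sharing $j \ge 2$ vertices create correlated clique-indicators, and the relevant quantity is~$\Delta = \sum_{A \sim B} \P(\text{both cliques})$. Here assumption~\eqref{eq:thm:lower:p:upper} and the precise form of~$s$ in~\eqref{def:s}, in particular the~$[k!k\log(1/p)]^{1/(k-1)}$ factor, are exactly what is needed to guarantee~$\mu^2/\Delta$ (or~$\mu - \Delta$) dominates~$s\log n$, so that Janson's inequality gives~$\P(Y_S = 0 \mid \fC_\delta) \le e^{-\Omega(\mu^2/(\mu+\Delta))}$ small enough to beat the union bound. I would need to be careful that the family~$\mathcal{F}$ is \emph{arbitrary} (only its size is controlled by~$\fC_\delta$), so the bound on~$\Delta$ must be uniform over all subfamilies of~$\binom{S}{k}$ of the given size — this is where one uses that, for the dominant case, removing a bounded fraction of $k$-sets cannot destroy the clique count, together with convexity/monotonicity of the pair-overlap sums.

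The main obstacle I expect is this last uniformity: controlling~$\P(Y_S = 0)$ not for a typical or structured family of uncovered $k$-sets, but for the worst-case family of the guaranteed size~$\delta\E X_S$, and doing so with a tail bound strong enough (exponential in something~$\gg s\log n$) to survive the union bound over all~$S$. A clean way to handle this is to extract from~$\mathcal{F}$ a large \emph{sunflower-free} or pairwise-sparse subfamily — e.g. greedily pick $k$-sets that pairwise intersect in at most one vertex, of which there are still~$\Omega(|\mathcal{F}|/s^{k-2})$ or so — on which the clique-indicators are nearly independent, and then a plain Chernoff/Janson bound applies with~$\Delta = o(\mu)$. One must check that this reduced expected clique count~$\mu'$ is still~$\gg s\log n$, which again comes down to the choice of~$s$ and assumptions~\eqref{eq:thm:lower:p:upper:0}--\eqref{eq:thm:lower:p:upper}; the~$k \ge 3$ cases and the~$k=2$ case (where everything is genuinely independent and only~\eqref{eq:thm:lower:p:upper:0} is relevant) should be separated. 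Throughout I would use~$k \le 2\log_{1/p} n = o(\sqrt{s})$, already noted in the excerpt, to justify the asymptotic approximations~$\binom{s}{k}\sim s^k/k!$ and to absorb polynomial-in-$k$ and~$\log(1/p)$ factors.
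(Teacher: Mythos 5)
Your overall skeleton (condition on the edges leaving $S$ so that the family of uncovered $k$-sets becomes deterministic, note that cliques inside $S$ use fresh independent edges, apply Janson, and beat the union bound over $\binom{n}{s}\le p^{-s}$ sets) is exactly the paper's route. However, the step you single out as the main difficulty is where your proposal goes wrong, in two ways. First, the ``worst-case family'' issue is not actually an obstacle: Janson's $\Delta$ can be bounded uniformly in the family by letting the overlapping partner range over \emph{all} of $\binom{S}{k}$ rather than only over uncovered sets, giving $\mu=X_Sp^{\binom{k}{2}}$ and $\Delta\le X_S\sum_{2\le i<k}\binom{k}{i}\binom{s-k}{k-i}p^{2\binom{k}{2}-\binom{i}{2}}$, so only the size $X_S\ge\delta\E X_S$ matters, never the structure of the family. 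One then shows (via the ratio argument on the terms $a_i=\binom{k}{i}\binom{s-k}{k-i}p^{\binom{k}{2}-\binom{i}{2}}$) that the exponent is governed by $a_2$ or $a_k$, with the $a_k$ term handled by the definition \eqref{def:s} of $s$ and the $a_2$ term handled precisely by assumption \eqref{eq:thm:lower:p:upper}; this is what delivers an exponent of order $s\log(1/p)$.

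Second, and more seriously, the fallback you propose to resolve the perceived obstacle --- extracting a pairwise-sparse (intersections $\le 1$ vertex) subfamily and applying Chernoff/Janson with $\Delta=o(\mu)$ --- fails quantitatively for every $k\ge 3$. Any family of $k$-sets in $S$ with pairwise intersections at most one vertex has size $O(s^2/k^2)$ (each vertex pair is used at most once), so the expected number of cliques it contains is $O\bigl(s^2p^{\binom{k}{2}}\bigr)$. Plugging in \eqref{def:s}, for $k=3$ this is $\Theta\bigl(C^2\log(1/p)\bigr)$, i.e.\ only polylogarithmic, and for $k\ge4$ or in the regime where $np$ dominates the maximum in \eqref{def:s} it is even $o(1)$; meanwhile the union bound over the $\binom{n}{s}\approx p^{-s}$ choices of $S$ forces a failure probability like $p^{2s}$, i.e.\ an exponent of order $s\log(1/p)\ge Cn^{\tau}\log(1/p)$. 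So the reduced expected clique count $\mu'$ is nowhere near $s\log n$, and the sparse-subfamily route cannot survive the union bound (it only works for $k=2$, where sparsification is vacuous). Since you defer the critical estimate to this mechanism, the proof as proposed has a genuine gap; the fix is to keep the full family and control the correlations through the uniform bound on $\Delta$ described above.
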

As discussed, the event~$\fS$ from Lemma~\ref{lem:manyedges} implies that~$\chi_c(\Gnp)\ge n/s$, i.e., establishes inequality~\eqref{eq:thm:lower:chi} with room to spare. 
To complete the proof of Theorem~\ref{thm:lower}, it thus remains to give the deferred proofs of Lemmas~\ref{lem:manypairs} and~\ref{lem:manyedges}, which build on top of each other.  
As we shall see, Lemma~\ref{lem:manypairs} is based on a delicate mix of approximation arguments and the bounded differences inequality, 
whereas Lemma~\ref{lem:manyedges} is based on Janson's inequality.

\subsection{\texorpdfstring{Concentration of uncovered $k$-sets: proof of }\ Lemma~\ref{lem:manypairs} }\label{sec:lem:manypairs}
This subsection is devoted to the technical proof of the key concentration result Lemma~\ref{lem:manypairs}. 
With an eye on a union bound argument over all the~$\tbinom{n}{s} \le (ne/s)^s \le p^{-s}$ many~$s$-vertex sets~$S \subseteq [n]$, 
here the natural target bound is~$\Pr(X_S \le \delta \E X_S) \le p^{2s}$, say. 
However, obtaining such good lower tail bounds is more tricky than one might think. 
For example, Janson's inequality for monotone events~\cite{RW} and McDiarmid's bounded differences inequality~\cite{McDiarmid1998} both only lead to insufficient bounds of form~$\Pr(X_S \le \delta \E X_S) \le e^{-O(s)}$, mainly because there are many mild dependencies between the various `not covered' events. 
To go beyond standard concentration inequalities and make the union bound argument~work, 
inspired by~\cite{GW19,WK4,WUT20} we will analyze a carefully designed auxiliary random variable~$X'_S$ that approximates~$X_S$ and has a more tractable tail behavior.

Our approximation arguments hinge on the basic observation that any vertex $w \in [n]\setminus S$ has in expectation only~$|S|p$ neighbors inside~$S$. 
With this in mind, we now introduce the `degree truncation' parameter 
\begin{equation}\label{def:x}
x \: := \: 6 sp ,
\end{equation}
where the definition~\eqref{def:s} of~$s$ implies\footnote{To see the claimed lower bound on~$\max\{np^2,6p^{-(k-2)/2}[\log(1/p)]^{1/(k-1)}\}$, 
note that for~$p \ge n^{-1/3}$ we have~${np^2 \ge n^{1/3} > \log n}$. 
Henceforth assuming~$p \le n^{-1/3}$, in the special case~$k=2$ we have~$6p^{-(k-2)/2}[\log(1/p)]^{1/(k-1)} \ge  6 \cdot \log(n^{1/3}) > \log n$, 
and for~$k \ge 3$ we have $6p^{-(k-2)/2}[\log(1/p)]^{1/(k-1)} \ge n^{(k-2)/6} \ge n^{1/6} > \log n$.} 
that~$x \ge C\max\{np^2,6p^{-(k-2)/2}[\log(1/p)]^{1/(k-1)}\} > C \log n$. 
Similar as for~$s$, we shall treat~$x$ as an integer to avoid clutter (this has negligible impact on our calculations). 
Fixing an $s$-vertex set~$S \subseteq [n]$, for every vertex~$w \in [n] \setminus S$ we then mark the lexicographically first~$x$ edges of~$\Gnp$ in~$\{w\} \times S$ (to clarify: if there are at most $x$ such edges, then we mark all of them). 
We call a pair~$\{u,v\}$ of vertices in~$S$ safely covered if there is a vertex in $w \in [n]\setminus S$ for which $\{u,w\}$ and $\{v,w\}$ are both marked edges. 
We define~$X'_S$~as the number of pairs~$\{u,v\} \in \binom{S}{2}$ which are not safely covered in~$\Gnp$.  
Since every safely covered pair is also covered, we have 
\begin{equation}\label{eq:XSapprox}
X'_S \ge X_S.
\end{equation}
Our strategy for proving Lemma~\ref{lem:manypairs} regarding concentration of~$X_S$ is based on two steps.
First, using degree-counting arguments we shall prove that~$X'_S$ approximates~$X_S$ sufficiently well, see Lemma~\ref{lem:approx}. 
Second, using the bounded differences inequality we shall then prove that~$X'_S$ is concentrated around its expectation, see Lemma~\ref{lem:approx}.  
These two results together suggest~$X_S \approx X'_S \approx \E X'_S \ge \E X_S$, which we shall make precise in the proof of Lemma~\ref{lem:manypairs} below.
\begin{lemma}\label{lem:approx}
For any~$\gamma \in (0,1)$ there is~$C_{0,\gamma} \ge e$ such that, for any~$C \ge C_{0,\gamma}$, whp the following event~$\cA_{\gamma}$ holds: 
we have ${X_S \ge X'_S-\gamma \E X_S}$ for all vertex sets~$S \subseteq [n]$ of size~$|S|=s$.
\end{lemma}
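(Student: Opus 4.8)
The plan is to control the gap $X'_S - X_S$ by counting exactly those pairs $\{u,v\}\in\binom{S}{2}$ that are covered but not safely covered. Such a pair witnesses the following: there is a vertex $w\in[n]\setminus S$ adjacent to both $u$ and $v$, yet for every common neighbor $w$ of $u,v$ in $[n]\setminus S$, at least one of the edges $\{u,w\},\{v,w\}$ is unmarked. An edge $\{u,w\}$ with $u\in S$, $w\notin S$ is unmarked only if $w$ has more than $x=6sp$ neighbours in $S$, i.e.\ $w$ has large degree into $S$. So the first step is the deterministic reduction: if $\{u,v\}$ is covered but not safely covered, then $u$ and $v$ have a common neighbour $w\in[n]\setminus S$ with $\deg_S(w) > x$, where $\deg_S(w)$ denotes the number of neighbours of $w$ in $S$. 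Consequently
\[
X'_S - X_S \;\le\; \sum_{w\in[n]\setminus S}\indic{\deg_S(w)>x}\binom{\deg_S(w)}{2}.
\]

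Next I would estimate the right-hand side in expectation, and then show it concentrates, uniformly over all $\binom{n}{s}$ choices of $S$. For a fixed $S$ and fixed $w\notin S$, $\deg_S(w)\sim\mathrm{Bin}(s,p)$ with mean $sp = x/6$, so $\deg_S(w)>x$ is a large-deviation event (deviation by a factor $6$ above the mean) with $\Pr(\deg_S(w)>x)\le e^{-c x}$ for an absolute constant $c>0$, by a Chernoff bound; since $x > C\log n$ this is at most $n^{-\Omega(C)}$. Moreover, conditioned on $\deg_S(w)>x$, the conditional expectation of $\binom{\deg_S(w)}{2}$ is only polynomially large in $s$ (one checks $\E[\binom{\deg_S(w)}{2}\indic{\deg_S(w)>x}]$ is exponentially small in $x$, hence in $\log n$). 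Summing over the $\le n$ vertices $w$, we get $\E(X'_S-X_S)$ bounded by $n^{-\Omega(C)}$ times a polynomial factor, which for $C$ large enough is far below $\gamma\,\E X_S$ — here one uses that $\E X_S$ is of order $\binom{s}{k}(1-p^k)^{n-s}$, which by the normalization built into the definition \eqref{def:s} of $s$ (and assumption \eqref{eq:thm:lower:extra}) is at least polynomially large in $n$, in fact at least $s^{\Omega(1)}\ge n^{\Omega(\tau)}$. So in expectation the gap is negligible with enormous room to spare.

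The remaining issue is to turn this into a statement that holds \emph{simultaneously} for all $S$. For a single $S$ I would get a tail bound on $\sum_w \indic{\deg_S(w)>x}\binom{\deg_S(w)}{2}$ of the form $\Pr(\,\cdot\, \ge \gamma\,\E X_S) \le p^{2s}$, say: the cleanest route is to note that the event ``$\deg_S(w)>x$ for some $w$ and the associated sum is at least $t$'' forces the presence of a configuration with $\ge x$ edges between $S$ and a single outside vertex repeated enough times to accumulate mass $t$, and to first-moment this over all such configurations (the number of edges involved grows linearly in the required excess, so each extra unit of ``badness'' costs a factor $p^{\Omega(x)} = n^{-\Omega(C)}$, which beats the $\binom{n}{s}\le p^{-s}$ union-bound factor once $C$ is large relative to $1/\tau$). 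Then union over the $\le p^{-s}$ sets $S$. The main obstacle I anticipate is precisely this uniformity step: one must choose the counting/bad-event formulation so that the per-$S$ failure probability is smaller than $p^{2s}$, which requires extracting $\Omega(x) = \Omega(Csp)$ ``forced edges'' worth of improbability — doable because $x$ is a large multiple of $sp$ and $C$ can be taken as large as needed depending on $\gamma$ (and on $\tau$, to absorb the $p^{-s}$ from the union bound), mirroring the truncation philosophy of \cite{GW19,WK4,WUT20}. Everything else (the Chernoff estimate on $\deg_S(w)$, the polynomial bound on $\E X_S$ coming from \eqref{def:s}) is routine.
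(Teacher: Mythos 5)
Your first step (the deterministic reduction of $X'_S-X_S$ to vertices $w$ with $\deg_S(w)>x$) is exactly the paper's starting point, and your expectation estimate is fine, but the uniformity step — which you correctly identify as the crux — has a genuine gap. The target per-$S$ tail bound $\Pr\bigl(\sum_{w}\indic{\deg_S(w)>x}\tbinom{\deg_S(w)}{k}\ge \gamma \E X_S\bigr)\le p^{2s}$ is not attainable by a first-moment count of ``bad configurations'', because your heuristic that ``the number of edges involved grows linearly in the required excess'' is false: a single vertex of degree $d$ into $S$ contributes mass $\tbinom{d}{k}\sim d^k/k!$, superlinearly in its $d$ edges, so the whole excess $\gamma\E X_S\approx \gamma s^k(1-p^k)^n/k!$ can be carried by one vertex of degree $d_0\approx \gamma^{1/k}(k!)^{1/k}s(1-p^k)^{n/k}$. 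For a fixed $S$ that event has probability roughly $\exp\bigl(-d_0\log(d_0/(esp))\bigr)$, which (e.g.\ for $k=2$ with $np^2=\Theta(\log n)$, where $(1-p^2)^{n/2}=n^{-\Omega(1)}$, but even when $(1-p^k)^{n/k}=\Theta(1)$, since $\gamma^{1/k}(k!)^{1/k}$ can be $<1$) is far larger than $p^{2s}$ and does not beat the $\binom{n}{s}\le p^{-s}$ union-bound factor; relatedly, the per-bad-vertex cost at threshold $x=6sp$ is $e^{-\Omega(x)}$, not $p^{\Omega(x)}$, and a per-vertex factor $n^{-\Omega(C)}$ with constant $C$ can never beat $p^{-s}=e^{\Omega(n^{\tau})}$ unless $\Omega(s\log(1/p)/x)$ bad vertices are forced, which the mass constraint alone does not force. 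Note also that heavy vertices genuinely exist for adversarial $S$: taking $S\supseteq N(w)$ gives $\deg_S(w)=\deg(w)\approx np>x$, so no union bound over $S$ can exclude them outright.

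What rescues the argument — and what the paper does — is to first intersect with the single global whp event that $\Delta(\Gnp)\le 2np$ (no union over $S$ needed), which caps each vertex's contribution at $\tbinom{2np}{k}$ and makes the catastrophic single-vertex scenario impossible; then, uniformly over $S$, one controls only the \emph{counts} $|V_{i,S}|$ of vertices with $\deg_S(w)\in[2^ix,2^{i+1}x]$ for $2^ix\le 2np$ (dyadic truncation), with per-$S$, per-$i$ failure probability $(en/s)^{-3s}$, which does beat the union bound; finally a purely deterministic computation on the good event gives $X'_S-X_S\le \tfrac{72}{k!}s(4np)^{k-1}$, and this is compared to $\E X_S\ge (1-o(1))s^k(1-p^k)^n/k!$ using the definition \eqref{def:s} of $s$, yielding $(X'_S-X_S)/\E X_S\le 73(4/C)^{k-1}\le\gamma$ for $C\ge 292/\gamma$. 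In particular the gap is \emph{not} shown to be $n^{-\Omega(C)}$-small as your sketch suggests — on the good event it is merely a $\gamma$-fraction of $\E X_S$, which is all the lemma needs. (A minor further point: the lemma concerns $k$-sets, so the relevant contributions are $\tbinom{\deg_S(w)}{k}$ rather than $\tbinom{\deg_S(w)}{2}$; the paper's prose definition of $X'_S$ has the same pairs/$k$-sets slip, but its proof uses $k$-sets.)
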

\begin{lemma}\label{lem:conc}
For any~$\gamma\in (0,1)$ there is~$C'_{0,\gamma} \ge e$ such that, for any~$C \ge C'_{0,\gamma}$, whp the following event~$\cC_{\gamma}$ holds: 
we have ${X'_S \ge (1-\gamma) \E X'_S}$ for all vertex sets~$S \subseteq [n]$ of size~$|S|=s$.
\end{lemma}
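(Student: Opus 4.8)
The plan is to prove Lemma~\ref{lem:conc} via McDiarmid's bounded differences inequality, applied to the random variable~$X'_S$ viewed as a function of the independent edge-indicators of~$\Gnp$, followed by a union bound over the~$\binom{n}{s} \le p^{-s}$ vertex sets~$S$. The crucial point that makes this work — and that motivated the design of~$X'_S$ in terms of \emph{marked} edges rather than all edges — is that~$X'_S$ has bounded Lipschitz coefficients with respect to the natural edge-exposure martingale: changing whether a single potential edge~$\{w,v\}$ with~$w \in [n]\setminus S$ is present can only alter the marked-edge status of edges incident to~$w$, and by construction each vertex~$w$ marks at most~$x = 6sp$ edges into~$S$. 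Hence flipping one edge~$\{w,v\}$ changes at most~$x$ of the ``safely covered'' indicators (those pairs~$\{v,u\}$ with~$u$ such that~$\{w,u\}$ is also marked), so the Lipschitz coefficient in the coordinate~$\{w,v\}$ is at most~$x$; edges inside~$\binom{S}{2}$ play no role at all in~$X'_S$ and contribute coefficient~$0$.

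Concretely, I would first record the clean deterministic bounded-differences statement: ordering the relevant edge-slots~$\{w,v\}$ with $w\in[n]\setminus S$, $v\in S$ (there are fewer than~$ns$ of them), each has Lipschitz constant~$c_{\{w,v\}} \le x$, so~$\sum c_{\{w,v\}}^2 \le ns \cdot x^2$. Then McDiarmid's inequality gives
\begin{equation*}
\Pr\bigl(X'_S \le \E X'_S - t\bigr) \le \exp\Bigl(-\frac{2t^2}{ns x^2}\Bigr).
\end{equation*}
Taking~$t = \gamma \E X'_S$, I need to lower-bound~$\E X'_S$ well enough that the exponent beats~$s \log(1/p)$ (with room to spare, e.g.\ target~$p^{2s}$). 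Here the second ingredient enters: a lower bound on~$\E X'_S$. Since~$X'_S \ge X_S$ and one expects~$\E X_S = \Theta\bigl(s^2 (1-p^2)^{n}\bigr) = \Theta\bigl(s^2 e^{-np^2}\bigr)$ (the number of pairs times the probability a fixed pair has no common neighbour), and since the definition~\eqref{def:s} of~$s$ makes~$s^2 e^{-2np^2/(k-1)}$ polynomially large in~$n$ relative to~$s$, the exponent~$2\gamma^2 (\E X'_S)^2/(ns x^2)$ will be of order~$(\E X_S)^2/(ns\cdot s^2p^2) \gtrsim s \cdot e^{-2np^2}/(np^2)$ or better — and plugging in the bounds from assumptions~\eqref{eq:thm:lower:p:upper:0}--\eqref{eq:thm:lower:p:upper} (the same bounds used to show~$x > C\log n$) will show this dominates~$s\log(1/p)$ once~$C = C'_{0,\gamma}$ is large. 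I would carry out this computation in the regime~$k=2$ separately from~$k \ge 3$, matching the footnote case-split, since the controlling term in~\eqref{def:s} switches.

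The main obstacle I anticipate is \emph{not} the concentration step itself but getting a sufficiently strong and \emph{clean} lower bound on~$\E X'_S$, uniformly over the allowed range of~$p$ and~$k$. One has to be a little careful that marking only the first~$x$ edges does not reduce~$\E X'_S$ relative to~$\E X_S$ in the wrong direction — but since~$X'_S \ge X_S$ pointwise this is automatic, so it suffices to lower bound~$\E X_S$, which is a routine second-moment-free computation:~$\E X_S = \binom{s}{2}(1-p^2)^{n-s} \ge (1-o(1))\tfrac{s^2}{2} e^{-np^2(1+o(1))}$, using~$s \le n/C$ and~$k \le 2\log_{1/p} n$. After that, the remaining work is to verify the arithmetic that the resulting exponential bound is~$\le p^{2s}$ and hence survives the union bound over all~$S$; I expect this to be straightforward but tedious, and would relegate the precise constant-chasing to a short calculation citing~\eqref{eq:thm:lower:p:upper:0} and~\eqref{eq:thm:lower:p:upper} exactly as in the footnote establishing~$x > C\log n$.
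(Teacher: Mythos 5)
Your approach has a genuine gap at its core step. You apply the \emph{vanilla} bounded differences inequality, with denominator $\sum_e c_e^2 \le ns\,x^2$, and claim the resulting exponent $\asymp \gamma^2 s\,e^{-2np^2}/(np^2)$ dominates $s\log(1/p)$ once $C$ is large. This is false precisely in the regime the lemma is built for. The assumptions of Theorem~\ref{thm:lower} with $k=2$ allow $np^2$ to be of constant order and even as large as $\Theta(\log n)$ (e.g.\ $p\approx n^{-1/2}$, where $e^{-2np^2}/(np^2)=\Theta(1)$, or $p$ near $p_1^*$ in the proof of Theorem~\ref{thm:main1}, where it is $o(1)$). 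There your tail bound is only $\exp(-O(s))$, while the union bound over the $\binom{n}{s}\approx e^{s\log(en/s)}=e^{\Theta(s\log n)}$ sets requires an exponent of order $s\log(1/p)\asymp s\log n$. Enlarging $C$ cannot repair this: both your exponent and the union-bound entropy scale linearly in $s\propto C$, so the ratio is unchanged. This failure of the plain bounded differences inequality is exactly the obstacle the paper flags at the start of Section~\ref{sec:lem:manypairs}; the actual proof uses the Bernoulli-adapted version (Lemma~\ref{typical BDI}, from \cite[Corollary~1.4]{War} and \cite[Theorem~3.8]{McDiarmid1998}), whose denominator $2(NpD^2+Dt)$ carries the crucial extra factor $p$ in the quadratic term. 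That factor buys back exactly the missing $1/p$, and assumption~\eqref{eq:thm:lower:p:upper:0} is calibrated so that the resulting exponent is at least $2s\log(1/p)$, which survives the union bound. So the concentration step is \emph{not} routine, contrary to your assessment; it is the crux, and your version of it fails.

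A secondary issue: you argue throughout with pairs and a Lipschitz constant $x$, but the lemma is needed for uncovered $k$-sets (it feeds into Lemma~\ref{lem:manypairs}, which is about $X_S$ counting $k$-sets), and flipping one edge $\{w,u\}$ can change the number of safely covered $k$-sets by up to $\binom{x}{k-1}$, not $x$. Your proposed $k=2$ versus $k\ge 3$ case split based on which term of~\eqref{def:s} dominates does not address this; the correct dependence on $k$ enters through $D=\binom{x}{k-1}$ and through the lower bound $\E X'_S \ge \E X_S \ge (1-o(1))\,s^k(1-p^k)^n/k!$, after which assumption~\eqref{eq:thm:lower:p:upper:0} bounds $(6p)^{k-1}$ and makes the exponent at least $2s\log(1/p)$ uniformly in $k$. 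Your lower bound on $\E X_S$ (the part you expected to be the obstacle) is indeed the easy part and agrees with the paper's~\eqref{eq:ExpXs} for $k=2$.
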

\begin{proof}[Proof of Lemma~\ref{lem:manypairs} assuming Lemmas~\ref{lem:approx}--\ref{lem:conc}]
Set~$\gamma := (1-\delta)/2$. 
For any vertex set~$S \subseteq [n]$ of size~$|S|=s$, 
note that the event $\cA_{\gamma} \cap \cC_{\gamma}$ and inequality~$X'_S \ge X_S$ together imply that 
\[
X_S \ge X'_S-\gamma \E X_S \ge (1-\gamma) \E X'_S -\gamma \E X_S 
\ge (1-2\gamma) \E X_S = \delta \E X_S.
\]
This completes the proof of Lemma~\ref{lem:manypairs} with~$C_\delta := \max\{C_{0,\gamma},C'_{0,\gamma}\}$, assuming Lemmas~\ref{lem:approx}--\ref{lem:conc}. 
\end{proof}

To complete the proof of Lemma~\ref{lem:manypairs}, it remains to give the proofs of Lemmas~\ref{lem:approx}--\ref{lem:conc}. 
We start with the concentration result Lemma~\ref{lem:conc}. 
Here the key point will be that changing the status of a single edge can not alter~$X'_S$ substantially `by~definition', 
which allows us to show concentration of~$X'_S$ via the following 
version of the bounded differences inequality for Bernoulli variables, 
see~{\cite[Corollary~1.4]{War}} and~{\cite[Theorem~3.8]{McDiarmid1998}}. 
\begin{lemma}[Bounded differences inequality]\label{typical BDI}
Let $\psi = (\xi_1,\dots , \xi_N)$ be a family of independent random variables with~$\xi_k\in \{0, 1\}$ and~$\Pr(\xi_k = 1) \le p$ for all~$k \in [N]$. 
Let $f:\{0,1\}^{N} \to \RR$ be a function, and assume that there exists~$D>0$ such that $|f(x) - f(y)|\le D$ for any two binary vectors $x,y \in \{0,1\}^N$ that differ in at most one coordinate. 
Setting~$Z:=f(\psi)$, for all~$t \ge 0$ we have  
\begin{equation}\label{eq:BDI}
    \Pr(Z\le \E Z - t) \: \le \: \exp\biggpar{-\frac{t^2}{2(NpD^2+Dt)}}.
\end{equation}
\end{lemma}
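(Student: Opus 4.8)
The plan is to prove~\eqref{eq:BDI} by the standard Doob martingale exponential-moment argument (of Freedman/Bernstein type), where the hypothesis $\Pr(\xi_k=1)\le p$ enters only through control of the \emph{conditional variances} of the martingale increments; this is precisely what produces the factor $NpD^2$ in~\eqref{eq:BDI} in place of the cruder $ND^2/4$ of the plain bounded-differences inequality. Concretely, I would set $Z_i:=\E[Z\mid \xi_1,\dots,\xi_i]$, so that $Z_0=\E Z$ and $Z_N=Z$, and observe that by independence $Z_i=h_i(\xi_1,\dots,\xi_i)$ for the function $h_i(y_1,\dots,y_i):=\E f(y_1,\dots,y_i,\xi_{i+1},\dots,\xi_N)$. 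Writing $D_i:=Z_i-Z_{i-1}$ and $\mathcal{F}_{i-1}:=\sigma(\xi_1,\dots,\xi_{i-1})$, averaging the pointwise hypothesis $|f(\dots,0,\dots)-f(\dots,1,\dots)|\le D$ over $\xi_{i+1},\dots,\xi_N$ gives $|d_i|\le D$ for the $\mathcal{F}_{i-1}$-measurable quantity $d_i:=h_i(\xi_1,\dots,\xi_{i-1},0)-h_i(\xi_1,\dots,\xi_{i-1},1)$. Since moreover $\Pr(\xi_i=1\mid\mathcal{F}_{i-1})=\Pr(\xi_i=1)=:p_i\le p$, conditionally on $\mathcal{F}_{i-1}$ the increment $D_i$ is a centred two-point variable taking value $p_id_i$ with probability $1-p_i$ and value $-(1-p_i)d_i$ with probability $p_i$; hence $|D_i|\le D$ and, crucially, $\E[D_i^2\mid\mathcal{F}_{i-1}]=p_i(1-p_i)d_i^2\le pD^2$, a bound that holds \emph{deterministically}.

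Next I would invoke the elementary exponential-moment estimate for a centred random variable $W$ with $W\le D$ almost surely: since $w\mapsto (e^w-1-w)/w^2$ is nondecreasing on $\RR$, one has $e^{\lambda W}-1-\lambda W\le W^2(e^{\lambda D}-1-\lambda D)/D^2$ for $\lambda\ge 0$, and therefore $\E e^{\lambda W}\le \exp\bigpar{\E[W^2]\,(e^{\lambda D}-1-\lambda D)/D^2}$; moreover $(e^{\lambda D}-1-\lambda D)/D^2\le \tfrac12\lambda^2/(1-\lambda D/3)$ for $0\le\lambda<3/D$, using $j!\ge 2\cdot 3^{j-2}$ for $j\ge 2$. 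Applying this with $W=-D_i$ conditionally on $\mathcal{F}_{i-1}$ gives $\E[e^{-\lambda D_i}\mid\mathcal{F}_{i-1}]\le \exp\bigpar{\tfrac12 pD^2\lambda^2/(1-\lambda D/3)}$, and iterating the tower property over $i=N,N-1,\dots,1$ (each factor bounded by the same deterministic quantity) yields
\[
\E\, e^{-\lambda(Z-\E Z)}\;\le\;\exp\biggpar{\frac{\tfrac12 NpD^2\,\lambda^2}{1-\lambda D/3}}\qquad\text{for }0\le\lambda<3/D.
\]
Markov's inequality then gives $\Pr(Z\le\E Z-t)\le\exp\bigpar{-\lambda t+\tfrac12 NpD^2\lambda^2/(1-\lambda D/3)}$, and the choice $\lambda:=t/(NpD^2+Dt/3)$ — which satisfies $\lambda D/3<1$ — makes the exponent equal to $-t^2/\bigpar{2(NpD^2+Dt/3)}$; since $Dt/3\le Dt$ this is at most $-t^2/\bigpar{2(NpD^2+Dt)}$, which is precisely~\eqref{eq:BDI}.

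I do not anticipate a genuine obstacle: the statement is a routine variant of the Bernstein/Freedman inequality and is essentially quoted from \cite[Corollary~1.4]{War} and \cite[Theorem~3.8]{McDiarmid1998}, so one may simply cite it. If a self-contained proof is preferred, the only steps needing a little care are the transfer of the bounded-differences hypothesis on $f$ to the bound $|d_i|\le D$ for the Doob increments, and the conditional-variance identity $\E[D_i^2\mid\mathcal{F}_{i-1}]=p_i(1-p_i)d_i^2$; everything after that — the scalar exponential-moment inequality and the optimisation over $\lambda$ — is entirely standard.
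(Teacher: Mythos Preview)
Your argument is correct. The paper itself does not prove this lemma at all: it simply quotes the statement and cites \cite[Corollary~1.4]{War} and \cite[Theorem~3.8]{McDiarmid1998}. Your self-contained Doob-martingale/Bernstein derivation is the standard route to such results and in fact yields the slightly sharper bound $\exp\bigl(-t^2/(2(NpD^2+Dt/3))\bigr)$ before you weaken $Dt/3$ to $Dt$ at the end. The two key inputs you identify --- that the Lipschitz hypothesis on $f$ transfers to $|d_i|\le D$ by averaging over the remaining coordinates, and that the two-point structure of $\xi_i$ gives the deterministic conditional-variance bound $\E[D_i^2\mid\mathcal F_{i-1}]=p_i(1-p_i)d_i^2\le pD^2$ --- are exactly what distinguishes this from the cruder Azuma--Hoeffding version, and you handle them correctly. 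For the purposes of this paper a citation suffices, but nothing in your write-up needs fixing.
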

\begin{proof}[Proof of Lemma~\ref{lem:conc}]
Fix a vertex set~$S \subseteq [n]$ of size~$|S|=s$. 
Note that $X'_S$ is a deterministic function~$f$ of the $N:=s(n-s)$ independent Bernoulli random variables $(\xi_1,\dots , \xi_N):=(\indic{uv\in E(\Gnp)})_{u\in S, v\in V\setminus S}$, 
which encode the edge-status of the corresponding vertex pairs in~$\Gnp$. 
Recalling the definition of marked edges right after~\eqref{def:x}, the crux is that any vertex~$w \in V\setminus S$ is always adjacent to at most~$x$ marked edges. 
It follows that adding or deleting any edge~$\{w,u\}$ with~$w \in V\setminus S$ and $u \in S$ can only change (increase or decrease) the number of safely covered pairs by at most~$\binom{x}{k-1}$. 
Hence we can apply Lemma~\ref{typical BDI} to~$Z:=X'_S$ with~$N:=s(n-s) \le sn$ and~$D:=\binom{x}{k-1}$, 
so that inequality~\eqref{eq:BDI} with~$t:= \gamma\E X'_S$~gives 
\begin{equation}\label{eq:BDI:X'S}
 \Pr(X'_S \le (1-\gamma) \E X'_S) \: \le \: \exp\biggpar{-\min\biggcpar{\frac{\gamma^2(\E X'_S)^2}{4snp\binom{x}{k-1}^2}, \: \frac{\gamma\E X'_S}{4\binom{x}{k-1}}}} .
\end{equation}

Gearing up towards a union bound argument, we now estimate the exponents in inequality~\eqref{eq:BDI:X'S}. 
Using inequality~$X'_S \ge X_S$ from~\eqref{eq:XSapprox}, we first obtain the auxiliary estimate  
\begin{equation}\label{eq:ExpXs}
\E X'_S \ge \E X_S \ge \tbinom{s}{k}(1-p^k)^{n-s} \ge (1-o(1)) s^k(1-p^k)^{n}/k!.
\end{equation} 
Together with $\tbinom{x}{k-1} \le x^{k-1}/(k-1)!$ and~$x/s=6p$, we then infer that 
\begin{equation}\label{eq:BDI:X'S:exp1}
\Pi := \min\biggcpar{\frac{\gamma^2(\E X'_S)^2}{4snp\binom{x}{k-1}^2}, \: \frac{\gamma\E X'_S}{4\binom{x}{k-1}}} 
\ge (1-o(1)) s \cdot \min\biggcpar{\biggpar{\frac{\gamma (1-p^k)^{n}}{2 k \sqrt{np}(6p)^{k-1}}}^2, \: \frac{\gamma(1-p^k)^{n}}{4k(6p)^{k-1}}} .
\end{equation}
Using assumption~\eqref{eq:thm:lower:p:upper:0} to bound~$(6p)^{k-1}$ from above, 
for~$C \ge 4/\gamma$ it follows in view of~$np \ge n^{\tau}$ that 
\begin{equation}\label{eq:BDI:X'S:exp2}
\Pi \ge \frac{s}{2} \cdot \min\biggcpar{\biggpar{\frac{C^{k-1}\gamma}{2}}^2 \cdot \log(1/p), \: \frac{C^{k-1}\gamma }{4} \cdot \sqrt{np \log(1/p)}} \ge 2 s \log(1/p). 
\end{equation}

Finally, by inserting~\eqref{eq:BDI:X'S:exp1}--\eqref{eq:BDI:X'S:exp2} into the tail bound~\eqref{eq:BDI:X'S}, 
using~$\tbinom{n}{s} \le (ne/s)^s \le p^{-s}$ it follows that 
\begin{equation*}
\Pr(\cC_{\gamma}) \le \sum_{S \subseteq [n]: |S|=s} \Pr(X'_S\ge (1-\gamma) \E X'_S)  \le \tbinom{n}{s} \cdot p^{-2s} \le p^{-s} = o(1) ,
\end{equation*}
completing the proof of Lemma~\ref{lem:conc} with~$C'_{0,\gamma} := 4/\gamma$. 
\end{proof}

We now give the proof of the approximation result Lemma~\ref{lem:approx}. 
Here the basic strategy will be to bound the difference $X'_S - X_S$ via (slightly tedious) degree counting~arguments that are inspired by~{\cite{ER61,SW20}}. 
\begin{proof}[Proof of Lemma~\ref{lem:approx}]
Fixing a vertex set~$S \subseteq [n]$ of size~$|S|=s$, recall that unmarked edges are ignored in the definition of~$X'_S$ given right before~\eqref{eq:XSapprox}, 
and that each of these unmarked edges must contain a vertex $w \in [n]\setminus S$ with at least~$x$ neighbors in~$S$. 
To get a handle on how much these unmarked edges contribute to the approximation error~${X_S-X'_S}$, 
we thus group the vertices outside of~$S$ according to their number of neighbors in~$S$. 
More precisely, we define~$V_{i,S}$ as the set of vertices $w \in [n]\setminus S$ that have between $2^i x$ and $2^{i+1} x$ neighbors in~$S$. 
Since the relevant edges adjacent to different vertices $w \in V \setminus S$ are all distinct, 
by standard random graph estimates we see that 
\begin{equation*}
\Pr(|V_{i,S}| \ge z) \le \binom{n}{z} \binom{z s}{z 2^i x } p^{z 2^i x} \le \biggsqpar{ n \Bigpar{\frac{spe}{2^i x}}^{2^i x}}^{z} \qquad \text{for any integer~$z \ge 1$.}
\end{equation*}
With foresight, for any integer~$i \ge 0$ we now introduce the `vertex counting' parameter 
\begin{equation*}
z_i \: := \: \frac{12 s \log_2(en/s)}{(i+1)2^{i+1}x} ,
\end{equation*}
where~$s \le n$ implies that~$z_i>0$. 
Recalling that~$x =6sp \ge C\log n$, as established below~\eqref{def:x}, for~$C \ge 2/\log 2$ we infer that~$x/2 \ge \log_2 n$ and 
\[
\Pr(|V_{i,S}| \ge \ceil{z_i}) \: \le \: \Bigsqpar{ n \cdot 2^{-(i+1)2^i x}}^{\ceil{z_i}} \le 2^{-(i+1)2^{i-1} x z_i} 
= (en/s)^{-3s}  .
\]
Let~$\cB$ denote the event that, for all $s$-vertex set~$S \subseteq [n]$, we have $|V_{i,S}| \le z_i$ for all integers~$i \ge 0$ with~${2^ix \le 2np}$.   
Using~$en/s \ge e$ and~$s \ge n^{\tau}$, it follows that 
\begin{equation}\label{eq:event:B}
\begin{split}
\Pr(\neg \cB) \le \sum_{S \subseteq [n]: |S|=s} \sum_{i \ge 0: 2^i x \le 2np} \Pr(|V_{i,S}| \ge \ceil{z_i}) \le \tbinom{n}{s} \cdot O(\log n) \cdot (en/s)^{-3s} \le (en/s)^{-s}  = o(1). 
\end{split}
\end{equation}
To eventually deduce the bound~$|V_{i,S}|=0$ for the remaining integers~$i \ge 0$ with~${2^ix > 2np}$, 
let~$\cD$ denote the auxiliary event that the maximum degree of~$\Gnp$ is at most~$2np$. 
Using standard Chernoff bounds (such as~\mbox{\cite[Corollary 2.3]{JLR}}) and assumption~\eqref{eq:thm:lower:p:upper:0} we infer~that
\begin{equation}\label{eq:chernoff:degree}
\Pr(\neg\cD) \le n \cdot e^{-\Theta(np)} \le e^{-\Omega(n^{\tau})} = o(1),
\end{equation}
which together with~\eqref{eq:event:B} establishes that the event~$\cB \cap \cD$ holds whp.

It remains to show that the event~$\cB \cap \cD$ implies the approximation event~$\cA_\gamma$,  
exploiting the implied size bounds~${|V_{i,S}| \le z_i \indic{2^i x \le 2np}}$ for all~$i \ge 0$. 
Since every unmarked edge contains a vertex from one of the above-defined vertex sets~$V_{i,S}$, 
using~$\tbinom{2^{i+1}x}{k} \le (2^{i+1}x)^k/k!$ it follows that 
\[
X'_S - X_S \le \sum_{i \ge 0} |V_{i,S}| \tbinom{2^{i+1}x}{k} 
\le \frac{12s}{k!} \sum_{i \ge 0: 2^{i}x \le 2np} \frac{(2^{i+1}x)^{k-1}  \log_2(en/s)}{i+1}.
\]
The above sum is intuitively dominated by the last term (up to constant factors), 
and to make this rigorous we shall now analyze the behavior of the term~$\log_2(en/s)/(i+1)$. 
In particular, if $2^{i+1}x \ge enp/\log(en/s) =: \Lambda$ holds, then using~$x=6sp$ and~$en/s \ge C$ we infer for all sufficiently large~$C \ge C_0$ that 
\[
i+1 \ge \log_2\Bigpar{6^{-1} \cdot (en/s)/\log(en/s)} \ge \tfrac{1}{2} \log_2(en/s) .
\]
By distinguishing whether $2^{i+1}x$ is smaller or larger than~$\Lambda=enp/\log(en/s)$, we thus arrive at 
\[
X'_S - X_S 
\le \frac{12s(2x)^{k-1}}{k!} \biggsqpar{\log_2(en/s)\sum_{i \ge 0: 2^{i}x < \Lambda/2} 2^{i(k-1)}  + 2\sum_{i \ge 0: \Lambda/2 \le 2^{i}x \le 2np} 2^{i(k-1)}}.
\]
For~$\Pi \in \{\Lambda/2,2np\}$ we have~$(2x)^{k-1}\sum_{i \ge 0: 2^i x \le \Pi} 2^{i(k-1)} \le 2 (2\Pi)^{k-1}$,  
so it follows that
\[
X'_S - X_S \le \frac{12s}{k!} \Bigsqpar{2(e np)^{k-1} + 4(4np)^{k-1}} 
\le \frac{72}{k!} s (4np)^{k-1} .
\]
Since~$\E X_S \ge (1-o(1)) s^k(1-p^k)^n/k!$ by~\eqref{eq:ExpXs}, 
using the definition~\eqref{def:s} of~$s$ it follows for~$C \ge 292/\gamma$ that 
\begin{equation*}
\frac{X'_S-X_S}{\E X_S} \le \frac{73(4np/s)^{k-1}}{(1-p^k)^{n}} \le 73 (4/C)^{k-1} \le \gamma,  
\end{equation*}
completing the proof of Lemma~\ref{lem:approx} with~$C_{0,\gamma}:=\max\{292/\gamma,C_0\}$.    
\end{proof}

\subsection{\texorpdfstring{Existence of uncovered $k$-cliques: proof of }\ Lemma~\ref{lem:manyedges}}\label{sec:lem:manyedges}
This subsection is devoted to the remaining proof of Lemma~\ref{lem:manyedges}. 
Here our starting point is the event~$\fC_\delta$ from Lemma~\ref{lem:manypairs}, which guarantees existence of many uncovered $k$-vertex sets in $S$. 
Using Janson's inequality, we will then prove that typically at least one of these uncovered $k$-vertex sets is in fact a clique. 
\begin{proof}[Proof of Lemma~\ref{lem:manyedges}]
Set~$\delta:=1/2$, and fix a vertex set~$S \subseteq [n]$ of size~$|S|=s$.   
We henceforth condition on the random variable
\begin{equation}\label{def:xiS}
\Xi_S \: := \: \bigpar{\indic{uv\in E(\Gnp)}}_{uv \in \binom{[n]}{2} \setminus \binom{S}{2}}, 
\end{equation}
which encodes the edge-status of all vertex pairs of~$\Gnp$ except for those inside~$S$. 
Since the number~$X_S$ of uncovered $k$-vertex sets~$\{v_1, \ldots, v_k\} \in \binom{S}{k}$ is determined by~$\Xi$, 
it follows that 
\begin{equation}\label{eq:Ys:bound:0}
\Pr(Y_S=0 \text{ and } X_S \ge \delta \E X_S) = \E\bigpar{\Pr(Y_S=0 \mid \Xi_S)\indic{X_S \ge \delta \E X_S} } .
\end{equation}
Recall that~$Y_S$ counts the number of uncovered $k$-vertex sets~$\{v_1, \ldots, v_k\} \in \binom{S}{k}$ which form a clique in~$\Gnp$. 
After conditioning on~$\Xi_S$, the crux is that all potential edges inside~$S$ are still included independently with probability~$p$. 
By applying Janson's inequality (see, e.g., \mbox{\cite[Theorem~2.14]{JLR}}) 
to~$Y_S$ it thus follows that 
\begin{equation}\label{eq:Ys:bound}
\Pr(Y_S=0 \mid \Xi_S) \le  \exp\biggpar{-\frac{\mu^2}{2(\mu+\Delta)}} ,
\end{equation}
where the associated expectation and correlation parameters~$\mu$ and $\Delta$ satisfy 
\begin{equation}\label{eq:Ys:bound:parameters}
\mu := \E(Y_S \mid \Xi_S) = X_S p^{\binom{k}{2}} \quad \text{ and } \quad \Delta \le \sum_{2 \le i < k} X_S \binom{k}{i}\binom{s-k}{k-i}p^{2\binom{k}{2}-\binom{i}{2}} 
\end{equation}
by standard random graph estimates. 
It will be convenient to~write 
\begin{equation}\label{eq:Ys:bound:parameters2}
\frac{\mu+\Delta}{\mu} \le \sum_{2 \le i \le k} \underbrace{\binom{k}{i}\binom{s-k}{k-i}p^{\binom{k}{2}-\binom{i}{2}}}_{=:a_i} = \sum_{2 \le i \le k} a_i .
\end{equation}
Considering ratios of consecutive terms, using $k = o(\sqrt{s})$ we see that for $2 \le i \le k-1$ we have 
\[
\frac{a_{i+1}}{a_i} = \frac{(k-i)^2}{(i+1) (s-2k+i+1)p^i} \sim \frac{(k-i)^2}{(i+1) sp^i} .
\]
Note that if we increase the value of the integer~$2 \le i \le k-2$ by one, 
then~$(k-i)^2/(i+1)$ changes by at most a constant~factor (that is bounded from above and below by universal constants that do not depend on~$C$), 
whereas~$sp^i$ changes by a factor of~$p$.
Since assumption~\eqref{eq:thm:lower:extra} implies~$p \le 1/C$, for all sufficiently large~$C \ge C'_0$ it follows the ratio~$a_{i+1}/a_{i}$ is increasing with~$i$, 
which in turn implies that the maximum of~${a_2, \ldots, a_k}$ is either~$a_2$ or~$a_k$. 
It follows~that the exponent in inequality~\eqref{eq:Ys:bound} satisfies 
\begin{equation}\label{eq:Ys:exp}
\frac{\mu^2}{2(\mu+\Delta)} \ge \frac{\mu}{2\sum_{2 \le i \le k} a_i} \ge \frac{X_S p^{\binom{k}{2}}}{2k\max\{a_2,a_k\}} 
\ge \frac{\delta \E X_S p^{\binom{k}{2}}}{2k\max\{a_2,a_k\}} \indic{X_S \ge \delta \E X_S} .
\end{equation}
Using first~$a_k=1$ together with the estimate~$\E X_S \ge (1-o(1)) s^k(1-p^k)^n/k!$ from~\eqref{eq:ExpXs}, 
and subsequently the definition~\eqref{def:s} of~$s$, 
for~$C \ge 8/\delta$ it follows that 
\begin{equation}\label{eq:Ys:exp:1}
\frac{\delta \E X_S p^{\binom{k}{2}}}{2ka_k} \ge \frac{\delta s}{4} \cdot \frac{s^{k-1}p^{\binom{k}{2}} (1-p^k)^{n}}{k!k} \ge \frac{\delta s}{4} \cdot C^{k-1}\log(1/p) \ge 2s \log(1/p) .
\end{equation}
When estimating inequality~\eqref{eq:Ys:exp:2} below, 
note that it suffices to consider the case~$k \ge 3$ (since~\eqref{eq:Ys:exp:1} already covers the case~$k=2$).
Using first~$a_2\le k^2 s^{k-2} p^{\binom{k}{2}-1}/(k-2)!$ together with the estimate~\eqref{eq:ExpXs} for~$\E X_S$,  
next the definition~\eqref{def:s} of~$s$, 
and subsequently assumption~\eqref{eq:thm:lower:p:upper} together with~$k \ge 3$, 
for~$C \ge 8/\delta$ it follows that 
\begin{equation}\label{eq:Ys:exp:2}
\begin{split}
\frac{\delta \E X_S p^{\binom{k}{2}}}{2k a_2} 
\ge \frac{\delta s}{4} \cdot \frac{s p (1-p^k)^n}{k^5} & \ge \frac{\delta C s}{4} \cdot \frac{\max\bigcpar{np^2, \: p^{-(k-2)/2}}(1-p^k)^{n (k-2)/(k-1)}}{k^5} \\
& \ge \frac{\delta C s}{4} \cdot \log(1/p) \ge 2s \log(1/p) .
\end{split}
\end{equation}
Since~$X_S$ is determined by the random variable~$\Xi_S$, 
by inserting~\eqref{eq:Ys:exp}--\eqref{eq:Ys:exp:2} into~\eqref{eq:Ys:bound}  
it follows that 
\begin{equation}\label{eq:Ys:bound:2}
\Pr(Y_S=0 \mid \Xi_S)\indic{X_S \ge \delta \E X_S} \le  \exp\bigpar{-2s \log(1/p)} = p^{2s} .
\end{equation}

Finally, since the event~$\fC_\delta$ from Lemma~\ref{lem:manypairs} implies~$X_S \ge \delta \E X_S$ for all $s$-vertex sets~$S \subseteq [n]$, 
using the probabilistic bounds~\eqref{eq:Ys:bound:0} and~\eqref{eq:Ys:bound:2} together with~$\tbinom{n}{s} \le (ne/s)^s \le p^{-s}$ it follows that 
\begin{equation*}
\Pr(\neg \fS \text{ and } \fC_\delta) \le \sum_{S \subseteq [n]: |S|=s} \Pr(Y_S=0 \text{ and } X_S \ge \delta \E X_S) \le \tbinom{n}{s} \cdot p^{2s} \le p^{s} = o(1) .
\end{equation*}
This completes the proof of Lemma~\ref{lem:manyedges} with~$C:=\max\{8/\delta,C'_0,C_\delta\}$, since~$\Pr(\neg\fC_\delta) = o(1)$ by Lemma~\ref{lem:manypairs}. 
\end{proof}

\section{Upper bound: proof of Theorem~\ref{thm:upper}}\label{sec: upper bounds} %
In this section we prove our upper bound result Theorem~\ref{thm:upper}.
Our proof strategy hinges on a partition of the vertices of the random graph~$\Gnp$ 
into ${O\bigl(p^{3/2}n/\sqrt{\log n}\bigr)}$ many disjoint vertex sets, 
where we shall use different sets of colors for each vertex set.
By bounding certain maximum degrees inside each such vertex set, 
we are then eventually able to obtain a valid clique coloring of~$\Gnp$ 
with the number of colors claimed by Theorem~\ref{thm:upper}.

\subsection{Part~\eqref{pt 1'} of Theorem~\ref{thm:upper}}\label{sec:upper:part1}
In this subsection we prove part~\eqref{pt 1'} of Theorem~\ref{thm:upper}.
To this end we fix a partition $S_1 \cupdot \cdots \cupdot S_r=[n]$ of the vertices of~$\Gnp$ into 
\begin{equation}\label{def:r}
r := \Big\lceil p^{3/2} n / \sqrt{\log n}\Big\rceil
\end{equation}
sets of almost equal size (i.e., whose sizes differ by at most one).
For each $i \in [r]$, we now construct a subgraph~$G_i \subseteq \Gnp[S_i]$ by deleting edges from $\Gnp[S_i]$ 
which (a)~are contained in a triangle of~$\Gnp$, but at the same time (b)~are not contained in a triangle of~$\Gnp[S_i]$. 
The crux is that every inclusion-maximal clique of~$\Gnp$ that only contains vertices from one set~$S_i$ 
also forms an inclusion-maximal clique in~$G_i \subseteq \Gnp[S_i]$.
In order to find a valid clique coloring of~$\Gnp$, by using different colors for each set~$S_i$ it thus suffices to find a valid clique coloring for each subgraph~$G_i$. 
The following convenient result of Joret, Micek, Reed and Smid~\cite{JMRS} reduces the aforementioned coloring problem to the maximum degree of the~$G_i$. 
\begin{theorem}\label{thm:JMRS}\cite{JMRS}
For every~$\varepsilon > 0$ there is~$\Delta_{\varepsilon}>0$ such that every graph~$G$ with maximum degree~$\Delta \ge \Delta_{\varepsilon}$ has clique chromatic number at most~$(1+\varepsilon)\Delta/\log \Delta$.
\end{theorem}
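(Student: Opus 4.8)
The plan is to prove \refT{thm:JMRS} by the probabilistic method: colour each vertex of $G$ independently and uniformly at random with one of $q := \lceil (1+\eps)\Delta/\log\Delta\rceil$ colours, and show that with positive probability no inclusion-maximal clique is monochromatic. The basic calculation is that an inclusion-maximal clique $K$ (necessarily of size $2 \le |K| \le \Delta+1$) is monochromatic with probability exactly $q^{1-|K|}$, so large maximal cliques are extremely unlikely to go wrong and the whole difficulty sits with the small ones. The smallest case, $|K|=2$, is an edge of $G$ lying in no triangle; such ``lonely'' edges form a \emph{triangle-free} subgraph $H\subseteq G$ of maximum degree at most $\Delta$, and avoiding monochromatic lonely edges already amounts to properly colouring $H$. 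This is exactly why the truth is $(1+\eps)\Delta/\log\Delta$ rather than something of order $\Delta$, and it signals that the proof must go through the colouring theory for triangle-free (more generally, locally sparse) graphs, which is what controls the sharp constant.

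First I would dispose of the large maximal cliques by a truncation trick. Fix a threshold $t=t(\Delta)$ of order $\log\Delta$. Every maximal clique of size at least $t$ contains a $t$-element clique, so it suffices to guarantee that no $t$-clique of $G$ is monochromatic. Any vertex lies in at most $\binom{\Delta}{t-1}$ such $t$-cliques, a given $t$-clique is monochromatic with probability $q^{1-t}$, and --- crucially --- the ``$t$-clique monochromatic'' events form a dependency graph of bounded degree (two such events are independent unless the underlying $t$-cliques meet). For $t$ a large enough multiple of $\log\Delta$ one has $\binom{\Delta}{t-1}q^{1-t}$ small enough that the symmetric Lov\'asz Local Lemma kills all these events, with only $o(q)$ colours' worth of slack consumed.

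The heart of the matter --- and the main obstacle --- is to simultaneously break all maximal cliques of size between $2$ and $t-1$, most importantly the lonely edges. Here neither a first-moment bound nor a direct Local Lemma works, because a single vertex can be incident with $\Delta$ lonely edges (and with many maximal triangles), so the corresponding events are far too numerous and too likely. Instead I would run a semi-random (``nibble'') colouring, adapting the arguments of Johansson and Molloy for triangle-free graphs and of Molloy--Reed for graphs with locally sparse neighbourhoods: iteratively assign tentative colours, maintain sparsity of the partially coloured graph, and apply a Local-Lemma-per-round analysis to finish with a colouring in which every maximal clique of size $<t$ is bichromatic, using at most $(1+\eps/2)\Delta/\log\Delta$ colours once $\Delta\ge\Delta_\eps$. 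Merging this with a random completion and the truncated Local Lemma bound above (applied conditionally on the partition produced by the nibble) then yields a valid clique colouring with at most $q$ colours.

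The delicate point is to keep everything inside a single colour budget: the semi-random phase already wants $(1+o(1))\Delta/\log\Delta$ colours on its own, so one cannot afford to pay separately for ``break the small maximal cliques'' and ``break the large maximal cliques''. The right fix is to carry the size-$\ge 3$ maximality constraints as additional, and much rarer, bad events through the nibble itself --- verifying that they stay Local-Lemma-satisfiable at every round --- rather than handling them in a separate pass. Getting this bookkeeping to close with the sharp leading constant $1+\eps$, as opposed to a crude bound of order $\Delta/\log\Delta$ with an unspecified constant that the same outline gives with far less care, is where the real work lies.
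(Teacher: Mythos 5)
First, a point of reference: the paper does not prove Theorem~\ref{thm:JMRS} at all --- it is imported as a black box from Joret, Micek, Reed and Smid~\cite{JMRS} --- so there is no internal proof to compare yours against; the only question is whether your argument stands on its own, and as written it does not. Your treatment of the two extremes is fine in isolation: lonely edges (maximal cliques of size $2$) reduce to properly colouring a triangle-free subgraph of maximum degree at most $\Delta$, and cliques of size at least $t=C\log\Delta$ with $C>e$ are handled by a symmetric Local Lemma computation of the kind you sketch. But the decisive step --- producing a single colouring with only $(1+\varepsilon)\Delta/\log\Delta$ colours in which \emph{every} maximal clique of size between $2$ and $t-1$ is bichromatic --- is delegated to an unexecuted ``adaptation of the Johansson/Molloy nibble with extra bad events,'' and you yourself flag that making this bookkeeping close with the sharp constant ``is where the real work lies.'' That is precisely the content of the theorem; what you have is a research plan, not a proof.

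Moreover, the specific mechanism you propose for the middle range is flawed: the size-$\ge 3$ maximality constraints are \emph{not} ``much rarer'' events that can simply ride along through the nibble. Consider a vertex $v$ whose neighbourhood induces a copy of $K_{\Delta/2,\Delta/2}$; for any edge $\{a,b\}$ of this bipartite graph the triangle $\{v,a,b\}$ is inclusion-maximal (any extending vertex would have to lie in $N(v)$ and be a common neighbour of $a$ and $b$ there, which is impossible), so $v$ lies in $\Theta(\Delta^2)$ maximal triangles. Under a uniform colouring with $q=\Theta(\Delta/\log\Delta)$ colours the expected number of monochromatic maximal triangles through $v$ is $\Theta\bigl((\log\Delta)^2\bigr)$, so these bad events are locally abundant, their dependency degree is enormous, and neither a union bound nor a per-round symmetric Local Lemma will dispose of them as stated. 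Any correct proof must confront such locally dense neighbourhoods with an additional structural idea (as the argument in~\cite{JMRS} does), rather than by appending supposedly rare events to the triangle-free colouring machinery; until that is done, the proposal has a genuine gap exactly where the theorem is hardest.
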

\noindent
Coloring each~$G_i$ using Theorem~\ref{thm:JMRS} with~$\varepsilon=1$, 
by invoking the maximum degree result Lemma~\ref{lem Delta} below 
we then obtain a valid clique coloring of~$\Gnp$ that whp uses at most
\[
r \cdot 84\Gamma/\log (42\Gamma) 
\; \le \; O(1) \cdot \max\biggcpar{\frac{e^{-np^2}np}{\log n}, \: \frac{p^{3/2}n}{\sqrt{\log n}}} \cdot \frac{\log n}{\log \Gamma}
\]
many colors.
To complete the proof of part~\eqref{pt 1'} of Theorem~\ref{thm:upper}, it thus remains to prove Lemma~\ref{lem Delta}. 
\begin{lemma}\label{lem Delta}
Let~$\Gamma := \max\bigl\{e^{-np^2}np/r, \: \log n\bigr\}$. 
Then, whp, the maximum degree of~$G_i$ is at most~$42 \Gamma$ for all~$i\in [r]$. 
\end{lemma}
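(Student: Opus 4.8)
The plan is to fix a vertex $v \in S_i$ and bound its degree in $G_i$ by splitting the neighbours of $v$ inside $S_i$ into two types according to the edge-deletion rule defining $G_i$. First, there are the neighbours $u \in S_i$ for which the edge $vu$ survives because it lies in \emph{no} triangle of $\Gnp$ at all; call this count $D_1(v)$. Second, there are the neighbours $u \in S_i$ for which $vu$ lies in a triangle of $\Gnp$ but also in a triangle of $\Gnp[S_i]$ (so it is not deleted); call this count $D_2(v)$. Every other neighbour of $v$ in $\Gnp[S_i]$ has its connecting edge removed, so $\deg_{G_i}(v) \le D_1(v) + D_2(v)$, and it suffices to show that whp $\max_i \max_{v \in S_i} D_1(v) \le 21\Gamma$ and $\max_i \max_{v \in S_i} D_2(v) \le 21\Gamma$, after which a union bound over the $\le n$ choices of $v$ (and $\le r \le n$ choices of $i$) finishes the lemma.

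For $D_1(v)$: conditioning on the neighbourhood of $v$, each neighbour $u$ of $v$ contributes to $D_1(v)$ iff $u,v$ have no common neighbour in all of $[n]$, which (given that $v$ has degree $d = \Theta(np)$ whp by a Chernoff bound) happens with probability roughly $(1-p^2)^{n-O(1)} \approx e^{-np^2}$, and these events are \emph{negatively correlated} across the neighbours $u$ (sharing the constraint ``no vertex is a common neighbour''), or can at least be dominated by an independent model after revealing the edges from $v$. Hence $D_1(v)$ is stochastically dominated by (a small perturbation of) a $\mathrm{Bin}(d, q)$ variable with $q \lesssim e^{-np^2}$, so $\E D_1(v) \lesssim e^{-np^2} np = \Gamma r$. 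Here is the key point: I only need $D_1(v) \lesssim \Gamma$, i.e.\ a factor $r$ saving over the mean $\Gamma r$ — but wait, that is backwards, so instead the bound is $\E D_1(v) \lesssim e^{-np^2}np$, which can be as large as $n^{1/3}$; the saving must come from the fact that we are bounding $D_1(v)$, the degree in the \emph{whole} $S_i$, and $|S_i| \approx n/r$, so actually $\E D_1(v) \lesssim e^{-np^2} \cdot (n/r) \cdot p \cdot (\text{something})$. The honest accounting: $v$ has $\approx |S_i| p = np/r$ neighbours inside $S_i$, each of which is ``uncovered'' with probability $\lesssim e^{-np^2}$, giving $\E D_1(v) \lesssim e^{-np^2} np / r = \Gamma$, and a Chernoff/Bernstein bound on this (dominated) binomial, using $\Gamma \ge \log n$, yields $\Pr(D_1(v) > 21\Gamma) \le n^{-\omega(1)}$.

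For $D_2(v)$: a neighbour $u \in S_i$ of $v$ contributes to $D_2(v)$ only if $v,u$ have a common neighbour inside $S_i$. The number of neighbours of $v$ inside $S_i$ is whp $O(np/r) = O(\sqrt{\log n \cdot p})$... more carefully, $np/r \le np / (p^{3/2}n/\sqrt{\log n}) = \sqrt{\log n}/\sqrt{p} = \sqrt{\log n}\, p^{-1/2}$, which is much larger than $\log n$ only when $p < 1/\log n$, consistent with the hypothesis $p \le n^{-1/3}/\log n$. The expected number of \emph{pairs} $(u,w)$ with $u,w \in S_i$ both adjacent to $v$ and adjacent to each other (i.e.\ cherries at $v$ closed inside $S_i$) is $\lesssim (|S_i| p)^2 p = (np/r)^2 p$, and every $u$ counted in $D_2(v)$ provides at least one such pair, so $\E D_2(v) \le (np/r)^2 p$. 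Using $r \approx p^{3/2} n / \sqrt{\log n}$ we get $(np/r)^2 p = n^2 p^3 / r^2 \cdot \tfrac1{} = \log n$, i.e.\ $\E D_2(v) = O(\log n) = O(\Gamma)$, and again a Chernoff-type tail bound on this count of closed cherries (or on $D_2(v)$ directly, which is dominated by it) gives $\Pr(D_2(v) > 21\Gamma) \le n^{-\omega(1)}$ provided the concentration is strong enough; since the closed-cherry count is a sum over $\binom{|S_i|}{2}$ weakly dependent indicators with mean $\Theta(\log n)$, an upper-tail Chernoff bound (e.g.\ via Janson's inequality or a direct moment bound, as in \cite{JLR}) suffices because $\log n$ grows.

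The main obstacle will be the $D_1(v)$ estimate: the ``uncovered'' events across the $\approx np/r$ neighbours of $v$ in $S_i$ are not independent — they all refer to the same pool of potential common neighbours in $[n]$ — so I cannot directly invoke a binomial tail bound. The clean fix is to reveal the edges incident to $v$ first (fixing $N(v)$), then observe that for distinct $u_1, u_2 \in N(v)$ the events ``$u_j$ has no common neighbour with $v$'' are conditionally \emph{positively} correlated would be bad, but in fact for the \emph{upper} tail of $D_1(v)$ negative association of the relevant indicators (each ``bad'' vertex $w$ that is a common neighbour can only \emph{decrease} $D_1(v)$) lets one dominate $D_1(v)$ by a genuine binomial, or alternatively one bounds $\Pr(D_1(v) \ge t)$ by a union bound over $t$-subsets $T \subseteq N(v)$ of the probability that \emph{every} vertex outside misses at least one vertex of $T\cup\{v\}$, which is at most $(1 - p^{t} \cdot p)^{\,n - O(1)}$ — no, that decays too fast and actually helps. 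I expect the cleanest route is the subset union bound: $\Pr(D_1(v) \ge t) \le \binom{d}{t}\bigl[(1-p^2)^{n - d - 1}\bigr]^{t}\le \bigl(\tfrac{ed}{t} e^{-np^2(1-o(1))}\bigr)^t$, and with $t = 21\Gamma \ge 21 e^{-np^2} d p \cdot (\text{const})$ (after absorbing the $S_i$-restriction) this is $n^{-\omega(1)}$; I would present $D_1(v)$ via this direct counting bound rather than stochastic domination to keep the dependency issue transparent.
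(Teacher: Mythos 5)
Your decomposition is exactly the paper's: for $v\in S_i$ the degree in $G_i$ is at most (the number of neighbours $u\in S_i$ such that $uv$ lies in no triangle of $\Gnp$) plus (twice the number of triangles of $\Gnp[S_i]$ through $v$), and your expectation accounting matches the paper's, namely $e^{-np^2}np/r\le\Gamma$ for the first count and $n^2p^3/r^2=O(\log n)$ for the second. The paper gets the $o(n^{-2})$ tails by conditioning: given $N_i(v)=U$ and $N^+_i(v)=T$ (neighbours of $v$ outside $S_i$), the first count is exactly a $\mathrm{Bin}\bigl(|U|,(1-p)^{|T|}\bigr)$ variable, and the second is the number of edges of $\Gnp[U]$, with $|U|\le 2np/r$ and $|T|\ge (n-|S_i|)p-\sqrt{(n-|S_i|)p}\log n$ enforced by Chernoff bounds. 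So your plan is the right one, but three points in your sketch need repair before it closes.

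First, your displayed bound $\Pr(D_1(v)\ge t)\le\binom{d}{t}\bigl[(1-p^2)^{n-d-1}\bigr]^t$ with $d=\deg_{\Gnp}(v)=\Theta(np)$ does not suffice: in the main regime where $\Gamma=e^{-np^2}np/r$ (e.g.\ $p\approx n^{-1/2}$), taking $t=21\Gamma$ makes the base $e\,d\,e^{-np^2(1-o(1))}/t\approx er/21\gg 1$, so the bound is vacuous. You must take $T\subseteq N_i(v)$ and first pin $|N_i(v)|\le 2np/r$ by Chernoff; this is the ``$S_i$-restriction'' you mention in passing, but it is where the entire factor-$r$ saving lives, so it has to appear in the binomial coefficient. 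Second, the product form of your per-subset probability cannot be justified by negative correlation: the events ``$u$ has no common neighbour with $v$'' are decreasing, hence positively correlated by Harris/FKG. The bound is nevertheless correct, because after conditioning on the edges at $v$ the event ``every $u\in T$ is uncovered'' is exactly ``every $u\in T$ is isolated in $\Gnp[N(v)]$'', whose probability is $(1-p)^{t|N(v)|-O(t^2)}$; the corrections are negligible since $tp=o(1)$ and $p\sqrt{np}\log n=\sqrt{np^3}\log n=o(1)$ in the stated range of $p$ --- this is in essence the paper's conditional-binomial step, and you should argue it that way rather than via negative association. Third, Janson's inequality is a lower-tail tool and gives nothing for the upper tail of your closed-cherry count; but conditioning on $|N_i(v)|\le 2np/r$ reduces that count to the number of edges inside a fixed set (independent of the conditioning), for which the elementary bound $\binom{\binom{|U|}{2}}{\lceil 12\log n\rceil}p^{\lceil 12\log n\rceil}=o(n^{-2})$ suffices, exactly as in the paper. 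With these repairs your argument coincides with the paper's proof.
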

\begin{proof}
We henceforth fix a partition number~$i \in [r]$ and a vertex~$v \in S_i$. 
In view of edge condition~(b) from the~$G_i \subseteq \Gnp[S_i]$ construction, 
we first bound the number~$X_v$ of triangles of~$\Gnp[S_i]$ that contain~$v$. 
We denote by~$N_i(v)$ the set of vertices from~$S_i$ which are adjacent to~$v$ in~$\Gnp$. 
Note that~$\E |N_i(v)| \le \ceil{n/r} \cdot p = \Theta(p^{-1/2}\sqrt{\log n}) = \omega(\log n)$. 
Let $\cU_i := {\{U \subseteq S_i: |U| \le 2np/r\}}$, and define~$Z_U$ as the number of edges in~$\Gnp[U]$. 
Using standard Chernoff bounds (such as~\mbox{\cite[Corollary 2.3]{JLR}}) and the independence of edges, it follows that 
\begin{equation}\label{eq:Xv}
\begin{split}
\Pr(X_v \ge 12 \log n) &\le \Pr(|N_i(v)| \ge 2np/r) + \sum_{U \in \cU_i}\underbrace{\Pr(X_v \ge 12 \log n \: | \: N_i(v)=U)}_{=\Pr(Z_U \ge 12 \log n)}\Pr(N_i(v)=U)\\
& \le o(n^{-2}) + \max_{U \in \cU_i} \Pr(Z_U \ge 12 \log n). 
\end{split}
\end{equation}
Note that for all~$U \in \cU_i$ we have $\E Z_U = \binom{|U|}{2}p \le 2n^2p^3/r^2 \le 2 \log n$. 
Hence 
\begin{equation}\label{eq:ZU}
\Pr(Z_U \ge 12 \log n) 
\le \binom{\binom{|U|}{2}}{\ceil{12 \log n}} p^{\ceil{12 \log n}} 
\le \biggpar{\frac{e\E Z_U }{12 \log n}}^{\ceil{12 \log n}} 
= o(n^{-2}). 
\end{equation}

With an eye on the edge condition~(a) from the~$G_i \subseteq \Gnp[S_i]$ construction, 
we next bound the number~$Y_v$ of vertices~$u \in N_i(v)$ for which there is no vertex~$w \in [n] \setminus S_i$ that is adjacent to both~$u$ and~$v$. 
We denote by~$N^+_i(v)$ the set of vertices from~$[n] \setminus S_i$ which are adjacent to~$v$ in~$\Gnp$. 
In view of~$\E |N^+_i(v)| = (n-|S_i|)p$, we define~$\cT_i := {\{T \subseteq [n]\setminus S_i: |T| \ge (n-|S_i|)p - \sqrt{(n-|S_i|) p} \log n\}}$. 
Proceeding similarly to~\eqref{eq:Xv}, using standard Chernoff bounds it follows that 
\begin{equation}\label{eq:Yv}
\begin{split}
\Pr(Y_v \ge 18 \Gamma) &\le o(n^{-2}) + \max_{U \in \cU_i,T \in \cT_i} \underbrace{\Pr(Y_v \ge 18 \Gamma \: | \: N_i(v)=U, \: N^+_i(v)=T)}_{=\Pr(Z_{U,T} \ge 18\Gamma)}, 
\end{split}
\end{equation}
where~$Z_{U,T}$ is a binomial random variable with~$|U|$ independent trials and success probability~$(1-p)^{|T|}$. 
Note that for all~$U \in \cU_i$ and~$T \in \cT_i$ we have 
\begin{equation}\label{eq:EZUT}
\E Z_{U,T} = |U|(1-p)^{|T|} 
\le 2np/r \cdot e^{-np^2 + |S_i|p^2 + p\sqrt{np}\log n} 
\le 3np/r \cdot e^{-np^2} ,
\end{equation}
since~$|S_i|p^2 =\ceil{n/r}p^2= O(\sqrt{p\log n})=o(1)$ and~$p\sqrt{np}\log n = \sqrt{np^3 (\log n)^2} = o(1)$. 
In view of~$\E Z_{U,T} \le 3 \Gamma$ and~$\Gamma \ge \log n$, it follows similarly to~\eqref{eq:ZU} that 
\begin{equation}\label{eq:ZUT}
\Pr(Z_{U,T} \ge 18 \Gamma) 
\le \biggpar{\frac{e\E Z_{U,T} }{18 \Gamma}}^{\ceil{18 \Gamma}} 
= o(n^{-2}). 
\end{equation}

Finally, taking a union bound over all~$i \in [r]$ and~$v \in S_i$, using inequalities~\mbox{\eqref{eq:Xv}--\eqref{eq:ZU}} and~\mbox{\eqref{eq:Yv}--\eqref{eq:ZUT}} 
it routinely follows that whp~$2X_v+ Y_v \le 24 \log n + 18 \Gamma \le 42 \Gamma$ for all~$i \in [r]$ and~$v \in S_i$. 
This completes the proof of \refL{lem Delta}, since the edge conditions~(a) and~(b) from the construction of~$G_i \subseteq \Gnp[S_i]$ 
ensure that for each vertex~$v \in S_i$ the degree inside~$G_i$ is at most~$2X_v+ Y_v$. 
\end{proof}

\subsection{Part~\eqref{pt 3'} of Theorem~\ref{thm:upper}}\label{sec:upper:part2}
In this subsection we prove part~\eqref{pt 3'} of Theorem~\ref{thm:upper}. 
Setting $\phat := 2p$, there is a natural coupling of~$\Gnp$ and~$\Gnph$ that satisfies~$\Gnp \subseteq \Gnph$. 
Since~$\phat = \Theta(p)$ and $\phat > \sqrt{(2\log n)/n}$, 
by the proof of Theorem~3.1 in~\cite{MMP} we know that~$\Gnph$ whp admits a valid clique coloring using at most 
\begin{equation}\label{def:rhat}
\hat{r} = O\Bigl(p^{3/2}n/\sqrt{\log n}\Bigr)
\end{equation}
colors, 
where each color class $S_1, \ldots , S_{\hat{r}}$ is triangle-free. 
For each~$i \in [\hat{r}]$, we now construct a subgraph~${G_i \subseteq \Gnp[S_i]}$ as in Section~\ref{sec:upper:part1}. 
Using~$\Gnp \subseteq \Gnph$ we infer that each~$\Gnp[S_i]$ is triangle-free, 
so from \refL{lem final} below it follows whp that each~$G_i$ has maximum degree at most~$\lambda = \ceil{9(\log n)/\xi}$, 
which in turn implies that each~$G_i$ admits a valid vertex coloring (and thus clique coloring) using at most~$\lambda+1 \le 2\lambda$ colors. 
By using different colors for each set~$S_i$,
analogously to Section~\ref{sec:upper:part1} we then obtain a valid clique coloring of~$\Gnp$ that whp uses~at~most
\begin{equation*}
\hat{r} \cdot 2\lambda 
\; \le \; O(1) \cdot \frac{p^{3/2}n}{\sqrt{\log n}} \cdot \frac{\log n}{\xi}
\end{equation*}
many colors. 
To complete the proof of part~\eqref{pt 3'} of Theorem~\ref{thm:upper}, it thus remains to prove Lemma~\ref{lem final}. 
\begin{lemma}\label{lem final}
The following event~$\cG$ holds whp: in~$\Gnp$ every vertex is contained in at most~${\lambda:= \ceil{9(\log n)/\xi}}$ many edges that are not contained in triangles of~$\Gnp$.
\end{lemma}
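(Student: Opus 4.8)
The plan is to fix a vertex~$v$ and bound the number~$D_v$ of edges at~$v$ that lie in no triangle of~$\Gnp$, i.e.\ edges~$\{v,u\}$ with~$u \in N(v)$ for which no vertex~$w \in [n]$ is adjacent to both~$v$ and~$u$. Conditioning first on~$N(v)$, standard Chernoff bounds (e.g.\ \cite[Corollary~2.3]{JLR}) show that whp~$|N(v)| \le 2np$ for all~$v$, so it costs only an $o(n^{-2})$ additive error to restrict to sets~$U$ with~$|U| \le 2np$. Conditioning further on~$N(v)=U$, for each~$u \in U$ the event ``$\{v,u\}$ lies in no triangle'' is exactly the event that none of the~$n-2$ vertices outside~$\{u,v\}$ is adjacent to \emph{both} $u$ and $v$; among the vertices outside~$U \cup \{v\}$ the adjacencies to~$u$ and to~$v$ are all fresh independent coins, so each such edge survives with probability at most~$(1-p^2)^{n-|U|-1} \le (1-p^2)^{n-2np-1}$. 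Thus~$D_v$ is stochastically dominated by a sum of~$|U| \le 2np$ indicators, one per~$u \in U$, each of probability at most~$q := (1-p^2)^{n-2np-1}$; these indicators are \emph{not} independent (two edges $\{v,u\},\{v,u'\}$ can be ``triangle-free'' together only via the common pool of outside vertices, and via the single potential edge~$\{u,u'\}$), but for an \emph{upper} tail bound it suffices to bound the factorial moments or simply to use that the probability that a \emph{fixed} set of~$\lambda+1$ of these edges are all triangle-free is at most~$q^{\lambda+1}$ times a harmless correction, which yields~$\Pr(D_v \ge \lambda+1) \le \binom{2np}{\lambda+1} q^{\lambda+1}$.

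The main computation is then to check that with~$\lambda = \ceil{9(\log n)/\xi}$ and~$\xi = 2np^2 - (\log n + \log\log n)$ this binomial-moment bound is~$o(n^{-1})$, so that a union bound over the~$n$ vertices (and over the choice of~$U$, absorbed into the earlier $o(n^{-2})$ terms) finishes the proof. Concretely, using~$\binom{2np}{\lambda+1} \le (2np)^{\lambda+1}$ and~$q \le e^{-p^2(n-2np-1)} = e^{-np^2(1+o(1))}$ (valid since~$np^2 \le 3\log n$ and~$np^3 = o(1)$ in the regime~$p \le \sqrt{3(\log n)/n}$), we get
\begin{equation*}
\Pr(D_v \ge \lambda+1) \; \le \; \bigpar{2np \cdot e^{-np^2(1+o(1))}}^{\lambda+1} \; = \; \bigpar{e^{-\xi/2 + o(\log n)} \cdot 2\sqrt{\log n}\,/\sqrt{\log n}}^{\lambda+1},
\end{equation*}
after substituting~$np = \sqrt{np^2 \cdot n} = \sqrt{(\log n/2)(1+o(1))\cdot n}\cdot$\,(lower-order), i.e.\ using~$2\log(np) = \log n + O(\log\log n)$ and~$2np^2 = \xi + \log n + \log\log n$ to write~$np \cdot e^{-np^2} = e^{(\log(np)) - np^2} = e^{-\xi/2 - \frac12\log\log n + o(\log n)}$; hence each factor is~$e^{-\xi/2 + o(\log n)} \le n^{-o(1)} \cdot e^{-\xi/2}$ and raising to the power~$\lambda+1 \ge 9(\log n)/\xi$ gives a bound of order~$e^{-4.5\log n + o(\log n)} = o(n^{-1})$, comfortably enough for the union bound over all~$v \in [n]$. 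The stated range~$\log\log n \le \xi < 5\log n$ (which itself follows from the hypothesis on~$p$) guarantees that~$\lambda$ is a well-defined positive integer and that the exponent~$(\lambda+1)\cdot \xi/2 \ge 4.5 \log n$ really does dominate the polynomial prefactor.

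The step I expect to be the main obstacle is the \textbf{dependency issue} in controlling the upper tail of~$D_v$: the events ``$\{v,u\}$ is triangle-free'' for varying~$u \in N(v)$ share the common randomness of the edges from~$[n]\setminus(N(v)\cup\{v\})$ into~$\{u,v\}$, so one cannot simply treat~$D_v$ as a binomial. The clean way around this is to observe that for any \emph{fixed} subset~$\{u_1,\dots,u_{\lambda+1}\} \subseteq N(v)$, the joint event that all of~$\{v,u_1\},\dots,\{v,u_{\lambda+1}\}$ are triangle-free is contained in the event that \emph{no} vertex~$w \in [n]\setminus(N(v)\cup\{v\})$ is adjacent to~$v$ and to at least one~$u_j$ — but this still has probability at most~$\prod_j (1-p^2)^{\,\cdot\,}$ is false; instead one bounds it by~$\bigpar{1-p^2}^{(n-|N(v)|-1)}$ raised to the first power only, which is \emph{not} $q^{\lambda+1}$. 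The correct fix, which I would implement, is to \emph{reveal edges in rounds}: process~$u_1,\dots,u_{\lambda+1}$ one at a time, and when processing~$u_j$ expose only the edges from the as-yet-untouched outside vertices to~$\{v,u_j\}$; since~$|N(v)| + (\lambda+1) = o(n)$, at each step at least~$n - o(n)$ outside vertices remain untouched, so conditionally~$\{v,u_j\}$ is triangle-free (within the revealed part) with probability at most~$(1-p^2)^{n-o(n)} = q^{1+o(1)}$, and multiplying over the~$\lambda+1$ rounds legitimately yields~$q^{(\lambda+1)(1+o(1))}$. With this rounds-exposure argument in hand, the binomial bound~$\binom{2np}{\lambda+1} q^{(\lambda+1)(1+o(1))}$ above is rigorous, and the rest is the routine estimate sketched in the previous paragraph.
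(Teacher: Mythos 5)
Your overall skeleton (fix a vertex $v$, condition on its neighbourhood, union bound over $(\lambda+1)$-subsets of candidate edges, per-edge factor $\approx e^{-np^2}$, exponent $(\lambda+1)\xi/2\ge 4.5\log n$, union over the $n$ vertices) is the right one and matches the paper's computation, but the step you yourself flag as the crux is handled incorrectly, and the error starts with an inconsistent conditional model. Once you condition on $N(v)=U$, every $w\notin U\cup\{v\}$ is a \emph{non}-neighbour of $v$, so it can never be a common neighbour of $v$ and $u$; the only candidate common neighbours are the other vertices of $U$, and the correct conditional probability that $\{v,u\}$ lies in no triangle is $(1-p)^{|U|-1}$, not $(1-p^2)^{n-|U|-1}$ obtained from ``fresh coins'' of outside vertices towards $v$ --- those coins are already determined by the conditioning. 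Numerically both expressions are $\approx e^{-np^2}$, which is why your final figures look right, but the justification is not; and a concrete casualty is that to make $(1-p)^{|U|-1}$ small you need a \emph{lower} bound $|U|\ge np-\sqrt{np}\,\log n$ on the degree of $v$ (as in the paper's set $\cU$), whereas you only impose $|U|\le 2np$, which is the wrong direction for the quantity that actually matters. The ``rounds of exposure'' fix also does not work as described: the edges from outside vertices to $v$ are shared by all the events (or are already fixed by the conditioning), so after the first round there are no untouched outside vertices whose adjacency to $v$ is fresh, and the later rounds cannot each contribute an independent factor $(1-p^2)^{n-o(n)}$. Worse, given the star at $v$ the events ``$\{v,u_j\}$ is triangle-free'' are decreasing, hence positively correlated, so a product upper bound cannot simply be asserted; it needs an exact computation.

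That exact computation is precisely how the paper closes the gap, and it rescues your plan with almost no change: given $N(v)=U$, the edge $\{v,u\}$ is triangle-free if and only if $u$ is an isolated vertex of $\Gnp[U]$, and since the pairs inside $U$ are independent of the star at $v$, the probability that $\lambda$ prescribed vertices of $U$ are simultaneously isolated in $\Gnp[U]$ is exactly $(1-p)^{\lambda(|U|-\lambda)+\binom{\lambda}{2}}$ --- no dependency issue remains. With the two-sided degree bound $\bigl||U|-np\bigr|\le\sqrt{np}\,\log n$ this gives $\Pr(Z_U\ge\lambda)\le |U|^{\lambda}(1-p)^{\lambda(|U|-\lambda)+\binom{\lambda}{2}}\le\bigl(5e^{-\xi/2}\bigr)^{\lambda}=o(n^{-1})$, and your union bound then finishes the proof. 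One further bookkeeping caution: track the per-factor slack as a constant (it comes from $e^{2np^3+p^2}=1+o(1)$, $e^{p\sqrt{np}\log n}=1+o(1)$ and $np<2\sqrt{n\log n}$), not as $e^{o(\log n)}$; when $\xi=\Theta(\log\log n)$ one has $\lambda=\Theta(\log n/\log\log n)$, and an $e^{o(\log n)}$ slack raised to the power $\lambda+1$ is not necessarily dominated by $e^{-4.5\log n}$, so the estimate as you wrote it does not close in that regime even though the underlying quantities do.
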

\begin{proof}
We denote by~$N(v)$ the set of vertices which are adjacent to~$v$ in~$\Gnp$,  and define~$Z_U$ as the number of isolated vertices in~$\Gnp[U]$. 
Using a union bound argument, we infer that
\begin{equation}\label{eq:cG}
\Pr(\neg \cG) \le \sum_{v \in [n]} \Pr(Z_{N(v)} \ge \lambda) .
\end{equation}
In view of~$\E N(v) =(n-1)p$, we define~$\cU := {\{U \subseteq [n]: ||U| -np| \le \sqrt{np}\log n\}}$. 
Proceeding similarly to~\eqref{eq:Xv}, using standard Chernoff bounds and the independence of edges it follows that 
\begin{equation}\label{eq:ZNv}
\begin{split}
\Pr(Z_{N(v)}\ge \lambda) 
& \le o(n^{-1}) + \max_{U \in \cU} \Pr(Z_{U}\ge \lambda). 
\end{split}
\end{equation}
Similar to~\eqref{eq:ZU} and~\eqref{eq:EZUT}, using standard random graph estimates and~$np < 2 \sqrt{n \log n}$ we obtain that 
\begin{equation*}
\Pr(Z_{U}\ge \lambda)
\le |U|^\lambda (1-p)^{\lambda(|U|-\lambda) + \binom{\lambda}{2}} 
\le \Bigpar{2np \cdot e^{-np^2+p\sqrt{np}\log n + p\lambda }}^{\lambda} 
\le \Bigpar{5 \sqrt{n \log n} \cdot e^{-np^2}}^{\lambda} ,
\end{equation*}
since~$p\sqrt{np}\log n = \sqrt{np^3(\log n)^2} = o(1)$ and~$p \lambda = O(p \log n) = o(1)$. 
Noting~${np^2}={\xi/2 + (\log n + \log \log n)/2}$ and~${\xi \ge \log \log n \gg 1}$, 
it follows by definition of~$\lambda \ge 9(\log n)/\xi$ that 
\begin{equation*}
\Pr(Z_{U}\ge \lambda) 
\le \Bigpar{5 e^{-\xi/2}}^{\lambda} \le e^{-\lambda\xi/3} 
= o(n^{-1}) ,
\end{equation*}
which together with~\eqref{eq:cG}--\eqref{eq:ZNv} establishes that~$\cG$ holds whp, completing the proof of \refL{lem final}. 
\end{proof}

\section{Main contributions: proof of Theorem~\ref{thm:main1} and~\ref{thm:main2}}\label{sec:technical}
In this section we prove Theorem~\ref{thm:main1} and~\ref{thm:main2}  
by several case distinctions involving 
\begin{equation}\label{eq:points}
p_0 := \frac{\log n}{n}, \qquad  p^*_0 := n^{-0.6}, \qquad p_1 := \sqrt{\frac{\log n - 3\log \log n}{4n}} \quad \text{ and } \quad p_2 := \biggpar{\frac{\log n}{n^2}}^{1/5} .
\end{equation}

\subsection{Proof of Theorem~\ref{thm:main1}}
Gearing up towards the proof of Theorem~\ref{thm:main1}, it is routine to check that the maximum in~\eqref{eq:main1} satisfies
\begin{equation}\label{eq:max1}
\max\biggcpar{\frac{e^{-np^2}np}{\log(np)}, \: \frac{p^{3/2}n}{\sqrt{\log n}}}
= \begin{cases}
\Theta\Bigpar{\frac{e^{-np^2}np}{\log n}} & \text{if $p \in [p^*_0,p_1]$,} \\
\Theta\Bigpar{\frac{p^{3/2}n}{\sqrt{\log n}}} & \text{if $p \in [p_1,1]$.} \\
\end{cases}
\end{equation}
With foresight, we shall now prove~\eqref{eq:main1} of Theorem~\ref{thm:main1} for the larger range~$p \in [p^*_0, p_2]$. 
\begin{proof}[Proof of Theorem~\ref{thm:main1}]
We start with the upper bound in~\eqref{eq:main1} for~$p \in [p^*_0, p_2]$, 
which with an extra factor of~$(\log n)/(\log \Gamma) = o(\log n)$  immediately follows from part~\eqref{pt 1'} of Theorem~\ref{thm:upper}.
Furthermore, for~$4np^2/(\log n) \le 1-\Omega(1)$ it is routine to check that  $(\log n)/(\log \Gamma) = O(1)$, 
which implies that in~\eqref{eq:main1} no extra polylogarithmic factors are needed.
To argue similarly for~$4np^2/(\log n) \ge 2+\Omega(1)$, 
first note that by~\eqref{eq:max1} the maximum in~\eqref{eq:main1} is of order~$p^{3/2}n/\sqrt{\log n}$, 
and also that~$(\log n)/\xi = O(1)$ in~\eqref{eq:thm:upper:chi:2}.   
Part~\eqref{pt 3'} of Theorem~\ref{thm:upper} thus shows that in~\eqref{eq:main1} no extra polylogarithmic factors are needed for~$2 +\Omega(1) \le 4np^2/(\log n) \le 12$, 
and for~$8 \le 4np^2/(\log n)$ this similarly follows from~\cite[Theorem~3.1]{MMP}, 
establishing the upper bound in~\eqref{eq:main1}.

We now turn to the more involved lower bound in~\eqref{eq:main1} for~$p^*_0 \le p \le p_2$. 
To this end we define 
\begin{equation}\label{eq:points:1}
p^*_1 := \sqrt{\frac{\log n - 5\log \log n-4\log(24C)}{4n}},
\end{equation}
where the constant~$C \ge e$ is from Theorem~\ref{thm:lower}. 
For~$p \in [p^*_0,p^*_1]$ it follows that~${p^4n = o(1)}$ and ${\log(1/p) \cdot np^3} \le {(\log n) \cdot n^{-1/2} (\log n)^{3/2}}$ as well as~${4np^2} \le {\log n - 5 \log \log n - 4 \log(24C)}$ together imply that
\begin{equation}
\frac{(1-p^2)^n}{6C\sqrt{2^2 \log(1/p)np} \cdot p} 
\: \ge \: 
\frac{(1-o(1))e^{-np^2} \cdot n^{1/4}}{12 C (\log n)^{5/4}} \ge 2-o(1) > 1 ,
\end{equation}
which verifies assumptions~\eqref{eq:thm:lower:p:upper:0}--\eqref{eq:thm:lower:p:upper} of Theorem~\ref{thm:lower} for~$k=2$ and~$\tau=0.4$.  
Since~$p^2n/\log (1/p) = O(1)$, by invoking~\eqref{eq:thm:lower:chi} with~$k=2$ it follows that there is a constant~$c_1>0$ such that~whp
\begin{equation}\label{eq:lower:1}
\chi_c\bigpar{\Gnp} \: \ge \: c_1 \cdot \frac{np}{\log n} \cdot e^{-np^2} \qquad\qquad \text{if~$p \in [p^*_0,p^*_1]$.}
\end{equation}
For~$p \in [p^*_0,p_2]$ it routinely follows that~${p^3n = o(1)}$, ${\log(1/p) \cdot np^5  = o(1)}$ and~${p^{-1/2}/\log(1/p) \gg 1}$ imply that 
\begin{equation}
\min\biggcpar{\frac{(1-p^3)^{n/2}}{6C[3^2 \log(1/p)np]^{1/4} \cdot p}, \: \frac{p^{-1/2} (1-p^3)^{n/2}}{3^5 \log(1/p)}}
 \; > \; 1 ,
\end{equation}
which verifies assumptions~\eqref{eq:thm:lower:p:upper:0}--\eqref{eq:thm:lower:p:upper} of Theorem~\ref{thm:lower} for~$k=3$ and~$\tau=0.4$.  
Since~$p^{5/2}n/\log (1/p) = O(1)$, by invoking~\eqref{eq:thm:lower:chi} with~$k=3$ it follows that there is a constant~$c_2>0$ such that~whp
\begin{equation}\label{eq:lower:2}
\chi_c\bigpar{\Gnp} \: \ge \: c_2 \cdot \frac{p^{3/2}n}{\sqrt{\log n}} \qquad\qquad \text{if~$p \in [p^*_0,p_2]$.}
\end{equation}
Combining the maximum property~\eqref{eq:max1} with inequalities~\eqref{eq:lower:1} and~\eqref{eq:lower:2}, 
for~$p \in [p^*_0,p^*_1] \cup [p_1,p_2]$ it follows that the lower bound in~\eqref{eq:main1} holds without extra polylogarithmic factors. 
For the remaining range~$p \in [p^*_1,p_1]$, note that~$e^{np^2}p^{1/2}\log n=\Omega(1)$ implies that 
the right-hand side of~\eqref{eq:lower:2} is at least ${\Omega( (\log n)^{-1/2})}$ times the right-hand side of~\eqref{eq:lower:1}, 
so that inequality~\eqref{eq:lower:2} establishes the lower bound in~\eqref{eq:main1} with an extra factor of~${\Omega( (\log n)^{-1/2})}$.
\end{proof}

\subsection{Proof of Theorem~\ref{thm:main2}}\label{sec:proofmain2}
Gearing up towards the proof of Theorem~\ref{thm:main2}, it is routine to check that the maximum in~\eqref{eq:main2} satisfies
\begin{equation}\label{eq:max2}
\max\biggcpar{1, \; \frac{e^{-np^2}np}{\log n}, \; \min\biggcpar{\frac{p^{3/2}n}{\sqrt{\log n}}, \:  \frac{1}{p}}} 
= \begin{cases}
\Theta(1)& \text{if $p \in [0,p_0]$,} \\
\Theta\Bigpar{\frac{e^{-np^2}np}{\log n}} & \text{if $p \in [p_0,p_1]$,} \\
\Theta\Bigpar{\frac{p^{3/2}n}{\sqrt{\log n}}} & \text{if $p \in [p_1,p_2]$,} \\
\Theta\Bigpar{\frac{1}{p}} & \text{if $p \in [p_2,1]$.} 
\end{cases}
\end{equation}
In the below proof of Theorem~\ref{thm:main2} we shall also use the following convenient results of McDiarmid, Mitsche and Pra{\l}at, 
which follow from Theorem~1.3~(a) 
and Lemmas~3.2--3.3 in~\cite{MMP}. 
Here~$D(G)$ denotes the size of the smallest dominating set of the graph~$G$, 
i.e., the size of the smallest subset~$D \subseteq V(G)$ of the vertices 
such that every vertex~$v \in V(G)$ not in~$D$ is adjacent to at least one member of~$D$. 
\begin{lemma}\label{lem:previous}\cite{MMP} 
The following holds for any fixed~$\gamma>0$.  
\vspace{-0.25em}\begin{enumerate}[(i)]
\partopsep=0pt \topsep=0pt \parskip0pt \parsep0pt \itemsep0.25em
    \item\label{item:lem:chromatic} If~$n^{-1} \ll p \le n^{-1/2-\gamma}$, then whp~$3^{-1} np/\log(np) \le \chi_c(\Gnp) \le \chi(\Gnp) \le np/\log(np)$.  
    \item\label{item:lem:dominating} If~$n^{-1}(\log n) \ll p \le 1-\gamma$, then whp~$\chi_c(\Gnp) \le 1+D(\Gnp) \le 2 \log_{1/(1-p)}(n)$. 
		Furthermore, for all~${p=p(n) \in [0,1]}$ we always have $\chi_c(\Gnp) \le 1+D(\Gnp)$. 
\end{enumerate}
\end{lemma}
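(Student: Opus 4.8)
The statement collects three results of McDiarmid, Mitsche and Pra{\l}at~\cite{MMP}, so the plan is simply to derive it from Theorem~1.3(a) and Lemmas~3.2--3.3 there; for completeness I also outline the underlying arguments. For part~(i), the upper bound is immediate from the general inequality $\chi_c(G)\le\chi(G)$ combined with the classical fact that, whenever the average degree $d:=np$ satisfies $d\to\infty$ and $d=o(n)$ (which holds for $n^{-1}\ll p\le n^{-1/2-\gamma}$), whp $\chi(\Gnp)=(1+o(1))\,d/(2\log d)$, hence $\chi(\Gnp)\le d/\log d=np/\log(np)$ for all large~$n$. For the lower bound the guiding heuristic is that almost all edges of~$\Gnp$ avoid triangles once $p\ll n^{-1/2}$: the expected number of edges contained in a triangle is $\Theta(n^3p^3)=o(n^2p)$, a vanishing fraction of all~edges. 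Made uniform, this is precisely~\cite[Theorem~1.3(a)]{MMP}, which states that whp \emph{every} vertex set~$S$ of size $s_0:=\lfloor 3\log(np)/p\rfloor$ contains an edge~$\{u,v\}$ of~$\Gnp$ with no common neighbour, i.e.\ an inclusion-maximal clique inside~$S$; since such an edge prevents~$S$ from being monochromatic, every colour class of a valid clique colouring has fewer than~$s_0$ vertices, whence $\chi_c(\Gnp)\ge n/s_0\ge \tfrac13\,np/\log(np)$. (This mirrors the $k=2$ instance of our Theorem~\ref{thm:lower}; in~\cite{MMP} the required ``every $s_0$-set contains an uncovered edge'' estimate is obtained through a first-moment computation combined with Janson's inequality, with ample room to spare.)

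For part~(ii), the deterministic inequality $\chi_c(G)\le 1+D(G)$ is~\cite[Lemma~3.2]{MMP}: fixing a minimum dominating set $D=\{d_1,\dots,d_t\}$, one partitions $V(G)$ into the parts $V_i:=\{d_i\}\cup\{v\notin D:\ i=\min\{j:d_j\sim v\}\}$, so that $d_i$ is adjacent to every other vertex of~$V_i$; since a graph with a universal vertex has clique chromatic number at most~$2$, assigning each~$d_i$ a private colour and colouring the remaining vertices of the~$V_i$ from a common palette (chosen with a little care so as to also split the maximal cliques that straddle two parts) yields a valid clique colouring with~$t+1$ colours. It remains to bound~$D(\Gnp)$, which is~\cite[Lemma~3.3]{MMP}: for $n^{-1}\log n\ll p\le 1-\gamma$ the quantity $\log(1/(1-p))$ is bounded, so $m:=\lceil 2\log_{1/(1-p)}n\rceil\to\infty$, and for a fixed vertex set~$A$ with $|A|=m$ and any $v\notin A$ one has $\Pr(v\text{ is not dominated by }A)=(1-p)^{m}\le n^{-2}$; a union bound over the at most~$n$ vertices outside~$A$ shows that~$A$ is whp a dominating set, so whp $D(\Gnp)\le m$, and a cosmetically sharper choice of~$m$ (replacing the constant~$2$ by~$2-o(1)$) gives the stated $1+D(\Gnp)\le 2\log_{1/(1-p)}n$ whp. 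The hypothesis $p\gg n^{-1}\log n$ guarantees whp the absence of isolated vertices, so this bound is not vacuous.

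The only genuinely nontrivial ingredient is the lower bound in part~(i): turning ``a vanishing fraction of edges lie in triangles'' into a clique-chromatic lower bound requires the \emph{uniform} statement that every moderately large vertex set already contains an edge lying in no triangle, which is where the concentration work of~\cite{MMP} is needed (and which is subsumed, in a stronger form, by our Theorem~\ref{thm:lower}); both upper bounds and the domination estimate are routine. Since all three ingredients are established in~\cite{MMP}, the formal proof amounts to the references~\cite[Theorem~1.3(a) and Lemmas~3.2--3.3]{MMP}.
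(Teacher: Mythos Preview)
Your proposal is correct and matches the paper's approach exactly: the paper does not prove this lemma but simply attributes it to Theorem~1.3(a) and Lemmas~3.2--3.3 of~\cite{MMP}, and your formal proof does the same. The additional sketches you supply (which the paper omits) are reasonable context, though your outline of the inequality $\chi_c(G)\le 1+D(G)$ is a bit hand-wavy about how the colouring handles maximal cliques disjoint from~$D$; since both you and the paper ultimately defer to~\cite{MMP} for this, it does not affect correctness.
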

\begin{proof}[Proof of Theorem~\ref{thm:main2}] 
We first establish the lower and upper bounds in~\eqref{eq:main2} for~${p \in [0,p_2]}$. 
In view of the maximum property~\eqref{eq:max2}, 
note that the proof of Theorem~\ref{thm:main1} directly establishes the bounds in~\eqref{eq:main2} for~${p \in [p_0^*,p_2]}$. 
For~$p \in [p_0,p_0^*]$ this similarly follows from part~\eqref{item:lem:chromatic} of Lemma~\ref{lem:previous}. 
We now turn our attention to the range~${p \in [0,p_0]}$. 
In view of the maximum property~\eqref{eq:max2}, 
the trivial inequality~$\chi_c(\Gnp) \ge 1$ already establishes the lower bound in~\eqref{eq:main2}.
For the upper bound we shall exploit that the usual chromatic number is monotone increasing under the addition of edges.
Namely, by combining the trivial bound~${\chi_c(\Gnp) \le \chi(\Gnp)}$ with a natural coupling satisfying~${\Gnp \subseteq \Gnpp{p_0}}$, 
using monotonicity of~$\chi(\Gnp)$ and part~\eqref{item:lem:chromatic} of Lemma~\ref{lem:previous} 
we infer that whp~$\chi_c(\Gnp) \le \chi(\Gnp) \le \chi(\Gnpp{p_0}) = o(\log n)$, 
establishing the upper bound in~\eqref{eq:main2} for~$p \in [0,p_0]$. 

We next establish the upper bound in~\eqref{eq:main2} for~$p \in [p_2,1]$.
Note that ${\log_{1/(1-p)}(n)}={(\log n)/[p(1+O(p))]}$ for~$p \le 1/2$, say. 
In view of the maximum property~\eqref{eq:max2}, 
it follows that part~\eqref{item:lem:dominating} of Lemma~\ref{lem:previous} 
establishes the upper bound in~\eqref{eq:main2} for~$p \in [p_2,1/2]$, say. 
For the remaining range~$p \in [1/2,1]$ we shall exploit that the size of the smallest dominating set is monotone decreasing under the addition of edges.
Namely, by combining a natural coupling satisfying~${\Gnp \supseteq \Gnpp{1/2}}$ with monotonicity of~$D(\Gnp)$, 
using part~\eqref{item:lem:dominating} of Lemma~\ref{lem:previous} we infer that whp~$\chi_c(\Gnp) \le {1+D(\Gnp)} \le {1+D(\Gnpp{1/2})} \le {2\log_2(n)}$, 
establishing the upper bound in~\eqref{eq:main2} for~$p \in [1/2,1]$. 

Finally, we turn to the more involved lower bound in~\eqref{eq:main2} for~$p \in [p_2,1]$. 
In view of the maximum property~\eqref{eq:max2} and the trivial inequality~$\chi_c(\Gnp) \ge 1$, 
it remains to consider the range~$p_2 \le p =o(1)$, say. We~define 
\begin{equation}\label{eq:defk}
k:=\bigceil{\log_{1/p}(n)} ,
\end{equation}
for which is is easy to check that~$3 \le k =o(\log n)$. 
We now verify assumptions~\eqref{eq:thm:lower:p:upper:0}--\eqref{eq:thm:lower:p:upper} of Theorem~\ref{thm:lower} with~$\tau=0.4$, say.
Using~$n^{-2/5} \le p_2 \le p=o(1)$ together with $\log(1/p) \le \log n$ and~$np^k \le 1$, it follows that 
\begin{equation}
\frac{np^2(1-p^k)^{n(k-2)/(k-1)}}{k^5 \log(1/p)} 
\: \ge \:  
\frac{n^{1/5} \cdot e^{-2np^k}}{(\log n)^6} \ge n^{1/5-o(1)} > 1 ,
\end{equation}
which verifies assumption~\eqref{eq:thm:lower:p:upper}. 
Using~$k^2\log(1/p)/(np) \le (\log n)^3/n^{3/5} < n^{-2/5}$ and~$np^k \le 1$ together with~$2 \le k-1 =o(\log n)$,
it similarly follows that 
\begin{equation}
\frac{(1-p^k)^{n/(k-1)}}{6C\bigsqpar{k^2 \log(1/p) np}^{1/2(k-1)} \cdot p}
\: \ge \:  
\frac{e^{-1}}{6C\bigsqpar{n^{-2/5} (np^{k})^2}^{1/2(k-1)}} 
\ge 
\frac{n^{1/5(k-1)}}{6Ce}
> 1 ,
\end{equation}
which verifies assumption~\eqref{eq:thm:lower:p:upper:0}. 
Using~$p=o(1)$ and~$np^k \le 1$, by invoking~\eqref{eq:thm:lower:chi} 
there is a constant~${c_3>0}$ such that~whp 
\begin{equation}\label{eq:lower:3}
\chi_c\bigpar{\Gnp} \; \ge \; c_3 \cdot \min\Biggcpar{\frac{1}{p}, \: \frac{p^{k/2}n}{\bigsqpar{k!k\log(1/p)}^{1/(k-1)} }}
\qquad\qquad \text{if~$p_2 \le p = o(1)$.}
\end{equation}
When~$k \ge 5$ we have $p > n^{-1/4}$, in which case it is easy to see 
that~$np^{k} \ge p$ and~$k!k \le k^{k+1}$ as well as~$\max\{k,\log(1/p)\} \le \log n$ and~$p^3 n \ge n^{1/4}$ 
together yield that 
\begin{equation}
\frac{p^{k/2+1}n}{\bigsqpar{k!k\log(1/p)}^{1/(k-1)}} 
\: \ge \:
\frac{\sqrt{p^{3}n}}{k^{O(1)} (\log n)^{O(1)}} 
\ge n^{1/8-o(1)}
> 1 ,
\end{equation}
which implies that the minimum in~\eqref{eq:lower:3} is of order~$1/p$. 
This order implication also holds when~${k=3}$, since then ${p^{k/2+1}n/[\log(1/p)]^{1/(k-1)}} \ge {(p_2)^{5/2}n/\sqrt{\log n}} = 1$.
In the remaining case~${k=4}$ this implication about the order of the minimum in~\eqref{eq:lower:3} also holds when~${p \ge (\log n)^{1/9}n^{-1/3}}$, 
since then ${p^{k/2+1}n/[\log(1/p)]^{1/(k-1)}} \ge {p^3n/(\log n)^{1/3}} \ge 1$. 
Putting things together, inequality~\eqref{eq:lower:3} implies that whp~${\chi_c(\Gnp) =\Omega(1/p)}$ 
when~${k \neq 4}$ or~${p \ge (\log n)^{1/9}n^{-1/3}}$.
To complete the proof of the lower bound in~\eqref{eq:main2}, 
in view of the maximum property~\eqref{eq:max2} it thus remains to consider the case~$k=4$ when~${n^{-1/3} < p < (\log n)^{1/9}n^{-1/3}}$.
In this case the weaker bound~$p^{k/2+1}n/[\log(1/p)]^{1/(k-1)} \ge p^3n/(\log n)^{1/3} \ge (\log n)^{-1/3}$ holds, 
which by inequality~\eqref{eq:lower:3} implies that whp~${\chi_c(\Gnp) =\Omega\bigpar{(\log n)^{-1/3}/p}}$, 
completing the proof of the lower bound in~\eqref{eq:main2}.
\end{proof}

The above proof shows that there are constants~$c_0,C_0>0$ such that whp~$c_0/p \le \chi_c(\Gnp) \le C_0 (\log n)/p$ 
when~${(\log n)^{1/5}n^{-2/5} \le p \le n^{-1/3}}$ or~${(\log n)^{1/9}n^{-1/3} \le p = o(1)}$, 
which in contrast to~\cite[Theorem~1.3~(h)]{MMP} also gives a non-trivial lower bound on~$\chi_c(\Gnp)$ for~${p=n^{-o(1)}}$. 
We believe that~$\chi_c(\Gnp) = \Theta((\log n)/p)$ should be the correct order of magnitude, 
see~\eqref{eq:conj1} and Conjecture~\ref{conj1} in the next section.

\section{Concluding remarks}\label{sec:conclusion}
The main remaining open problem is to determine the typical order of magnitude 
of the clique chromatic number~$\chi_c(\Gnp)$ 
for all edge-probabilities~$p=p(n)$ of interest, 
i.e., to close the gaps in Theorem~\ref{thm:main1} and~\ref{thm:main2}.
\begin{problem}\label{prm:open}
Determine the whp order of magnitude of~$\chi_c(\Gnp)$ 
when~$p \gg n^{-1}$ and $1-p = \Omega(1)$. 
\end{problem}
In view of previous work~\cite{MMP,AK17,DK}, Lemma~\ref{lem:previous} and Theorem~\ref{thm:main1}, 
it is tempting to conjecture that the bounds of Theorem~\ref{thm:main2} can be further refined to the with high probability bound
\begin{equation}\label{eq:conj1}
\displaystyle\chi_c(\Gnp)
= \Theta\biggpar{\max\biggcpar{\displaystyle 1, \; \displaystyle\frac{e^{-np^2}np}{\log(np)}, \; \min\biggcpar{\displaystyle \frac{p^{3/2}n}{\sqrt{\log n}}, \: \displaystyle \log_{1/(1-p)}(n)}}} ,
\end{equation}
though there might be extra complications near the transition probabilities~${p = n^{-x+o(1)}}$ with~${x \in \{1/2,2/5,1/3\}}$, 
around which the optimal lower bound strategies appear to change.
In terms of supporting evidence, \eqref{eq:conj1} holds for constant~${p \in (0,1)}$ due to work of Alon and Krivelevich~\cite{AK17} and Demidovich and Zhukovskii~\cite{DK}.
Furthermore, \eqref{eq:conj1} holds for most ranges of~$p=p(n)$ with~${n^{-1} \ll p \le n^{-2/5-o(1)}}$ due to Theorem~\ref{thm:main1}, Lemma~\ref{lem:previous} and~\mbox{\cite[Theorem~1.3]{MMP}}, 
noting that~${\log_{1/(1-p)}(n) \sim (\log n)/p}$ when~${p=o(1)}$.  
In view of~\eqref{eq:conj1}, the next step towards~Problem~\ref{prm:open} is thus the following conjecture, 
whose upper bound holds by~\eqref{item:lem:dominating} of Lemma~\ref{lem:previous}.
\begin{conjecture}\label{conj1}
If~$p=p(n)$ satisfies~${(\log n)^{3/5}n^{-2/5} \ll p = o(1)}$, then whp~${\chi_c(\Gnp) =  \Theta(\log n/p)}$. 
\end{conjecture}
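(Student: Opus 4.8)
\medskip\noindent\textbf{A proof strategy for Conjecture~\ref{conj1}.}
The upper bound is already available: since $\log_{1/(1-p)}(n)=(1+o(1))(\log n)/p$ when $p=o(1)$, part~\eqref{item:lem:dominating} of Lemma~\ref{lem:previous} gives $\chi_c(\Gnp)\le 1+D(\Gnp)\le (2+o(1))(\log n)/p$ whp. So the entire difficulty lies in the matching lower bound $\chi_c(\Gnp)=\Omega((\log n)/p)$, which must improve the bound $\chi_c(\Gnp)=\Omega(1/p)$ obtained in the proof of Theorem~\ref{thm:main2} by a full logarithmic factor. Equivalently, one must lower bound by $\Omega((\log n)/p)$ the chromatic number of the hypergraph on vertex set~$[n]$ whose edges are the inclusion-maximal cliques of~$\Gnp$.

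The point to absorb first is that neither the strategy behind Theorem~\ref{thm:lower} --- showing that whp every vertex set of some size~$s$ contains an inclusion-maximal clique, hence $\chi_c(\Gnp)\ge n/s$ --- nor the trivial bound $\chi_c(\Gnp)\ge n/\alpha$ in terms of the independence number~$\alpha$ of that hypergraph can go beyond $\Omega(1/p)$ here. The obstruction is the same in both cases: the neighbourhood~$N(w)$ of any vertex~$w$ contains no inclusion-maximal clique of~$\Gnp$ (every clique inside~$N(w)$ extends by~$w$), so~$N(w)$ is simultaneously a valid colour class and an independent set of the hypergraph, of size up to~$\Delta(\Gnp)=(1+o(1))np$. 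Thus in the range $(\log n)^{3/5}n^{-2/5}\ll p=o(1)$ there are valid colour classes of size $(1+o(1))np\gg np/\log n$, and the extra logarithmic factor can only come from the \emph{global} structure of the partition. Heuristically, a near-optimal clique colouring is forced to behave like a cover of~$[n]$ by vertex neighbourhoods, and such a cover needs $\ge D(\Gnp)=(1+o(1))(\log n)/p$ parts because of a coupon-collector effect; turning this heuristic into a proof is the crux.

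Concretely, the plan is: fix a small $\eps>0$, assume for contradiction a valid clique colouring with classes $C_1,\dots,C_c$ and $c\le(1-\eps)(\log n)/p$, and set $k:=\lceil\log_{1/p}n\rceil$. \textbf{Step 1:} at least half of $[n]$ lies in classes with $|C_i|\ge n/(2c)\ge np/(2(1-\eps)\log n)$; for such an~$i$, whp $G[C_i]\sim G_{|C_i|,p}$ has clique number exceeding~$k$, and a positive fraction of its $\approx\binom{|C_i|}{k}p^{\binom{k}{2}}$ many $k$-cliques are inclusion-maximal within~$G[C_i]$. \textbf{Step 2:} validity forces every maximal-in-$G[C_i]$ clique~$K$ to have a common neighbour in $[n]\setminus C_i$, since a common neighbour inside~$C_i$ would contradict maximality of~$K$ in~$G[C_i]$; collecting these external cover vertices into a set~$W_i$, one sees that~$W_i$ \emph{dominates}~$C_i$ (take, for $v\in C_i$, a maximal-in-$G[C_i]$ clique $K\ni v$; its cover lies in~$W_i$ and is adjacent to~$v$), and moreover --- arguing as in the proof of Lemma~\ref{lem:manyedges}, but over the random graph~$G[C_i]$ together with the external neighbourhoods, so that the whp statement holds uniformly over all relevant sets~$C$ --- that~$W_i$ must in fact be ``spread out'' rather than concentrated in a single neighbourhood, because no single vertex's neighbourhood contains more than a $p^{k}$-fraction of the $k$-cliques of~$G[C_i]$. \textbf{Step 3:} combine these lower bounds on the~$|W_i|$ over the large classes with $c\le(1-\eps)(\log n)/p$ to reach a contradiction with $D(\Gnp)=(1+o(1))(\log n)/p$.

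The main obstacle is Step~3, together with the uniform large-deviation estimate underlying Step~2. The difficulty is genuine: a naive bound only gives $|W_i|\ge|C_i|/\Delta(\Gnp)$, whose sum over~$i$ is $\Omega(1/p)$ and useless, while a single vertex may serve as a cover for many classes, so summing $\sum_i|W_i|$ is also useless --- one has to show that validity forces exactly the coupon-collector $\log n$ overhead of the domination number, i.e.\ that the classes with $|C_i|\gg np/\log n$, once they are not almost contained in individual neighbourhoods, jointly witness an inclusion-maximal clique of~$\Gnp$ unless $c=\Omega((\log n)/p)$. Extra care is needed near the transition probabilities $p=n^{-1/2+o(1)}$, $n^{-2/5+o(1)}$ and $n^{-1/3+o(1)}$, where $k=\lceil\log_{1/p}n\rceil$ jumps and, as noted below~\eqref{eq:conj1}, the extremal configuration (and hence the right choice of witnessing cliques in Step~1) changes; the natural route is to extend the arguments of Alon and Krivelevich~\cite{AK17} and Demidovich and Zhukovskii~\cite{DK}, which establish~\eqref{eq:conj1} for constant~$p$, to the present polynomially sparse regime.
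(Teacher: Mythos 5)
The statement you are addressing is not proved in the paper at all: it is stated as Conjecture~\ref{conj1}, i.e.\ an open problem, and the paper itself only supplies the upper bound (via part~\eqref{item:lem:dominating} of Lemma~\ref{lem:previous}, exactly as you note) together with the weaker lower bound~$\chi_c(\Gnp)=\Omega(1/p)$ from the proof of Theorem~\ref{thm:main2}. Your submission is likewise not a proof but a research plan: the upper-bound paragraph is correct and coincides with the paper's remark, and your diagnosis of why Theorem~\ref{thm:lower} cannot give more than~$\Omega(1/p)$ (any neighbourhood~$N(w)$, of size~$(1+o(1))np$, is a valid colour class since every clique inside it extends by~$w$) is sound and is precisely the point that makes the conjecture hard. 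But the actual content of the conjecture --- the lower bound~$\Omega((\log n)/p)$ --- is nowhere established in your text: Step~2's assertion that the cover sets~$W_i$ are ``spread out'' is neither quantified nor proved, and Step~3, which you yourself identify as the crux, is left entirely open. As it stands the argument proves nothing beyond what Lemma~\ref{lem:previous} and Theorem~\ref{thm:main2} already give.

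Beyond the admitted incompleteness, one concrete obstacle in your plan deserves emphasis. The colour classes~$C_i$ are chosen by an adversary \emph{after} seeing~$\Gnp$, so any statement of the form ``whp $G[C_i]$ contains many $k$-cliques that are maximal in $G[C_i]$ and whose external covers are spread out'' must hold simultaneously for \emph{all} $\binom{n}{|C_i|}$ candidate sets of the relevant size, i.e.\ with failure probability roughly~$p^{\Theta(|C_i|)}$ per set. This is exactly the difficulty that forced the paper to replace Janson's inequality by the truncated variable~$X'_S$ and the bounded differences argument in Section~\ref{sec:lem:manypairs}, and it is not clear that those techniques survive when the target class size drops from~$s\approx np$ to~$|C_i|\approx np/\log n$ while the required conclusion strengthens from ``some uncovered clique exists'' to a structural statement about how the covering vertices distribute across classes; moreover a single vertex may cover maximal cliques in many classes at once, so no per-class counting of the~$W_i$ can alone force the coupon-collector factor. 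In short, there is a genuine gap: the key ideas needed for Steps~2 and~3 are missing, which is consistent with the paper leaving the statement as a conjecture.
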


\small
\bibliographystyle{plain}

\normalsize

\end{document}